\newcommand{\ZZ}{\mathbb{Z}}
\newcommand{\QQ}{\mathbb{Q}}
\newcommand{\PP}{\mathsf{P}}
\newcommand{\p}{\mathsf{p}}
\renewcommand{\v}{\mathsf{v}}
\newcommand{\w}{\mathsf{w}}
\renewcommand{\u}{\mathsf{u}}
\newcommand{\one}{\mathsf{1}}
\newcommand{\linkage}{{\mathcal{L}}}
\newcommand{\ch}{{\mathcal{C}}}
\newcommand{\VV}{{\mathcal{V}}}
\def\character{\mathrm{char}}
\newcommand{\entry}[2]{#1(#2)}
\newcommand{\even}[1]{{\delta_{\mathrm{even}}(#1)}}
\newcommand{\powerset}[1]{{\mathcal{P}(#1)}}
\newcommand{\strictsubsets}[2]{{\mathrm{des}_{#2}(#1)}}
\newcommand{\subsets}[2]{{\overline{\mathrm{des}}_{#2}(#1)}}
\newcommand{\supersets}[2]{{\mathrm{anc}_{#2}(#1)}}
\newcommand{\disjointsets}[2]{{\mathrm{disj}_{#2}(#1)}}
\newcommand{\rank}{\rho}
\newcommand{\base}{\alpha}
\newcommand{\coef}{c}
\newcommand{\coeflink}{d}
\newcommand{\calS}{\mathcal{S}}
\newcommand{\calT}{\mathcal{T}}
\newcommand{\setvec}[1]{{\v_{#1}}}
\newcommand{\outset}[1]{\widehat{#1}}
\newcommand{\coefsum}[2]{C(#1,#2)}
\newcommand{\coefsumlink}[2]{D(#1,#2)}
\newcommand{\construct}[2]{K_{#2}(#1)}
\newcommand{\bitstrings}[1]{X_{#1}}
\newcommand{\comp}[1]{ - #1}
\newtheorem{theorem}{Theorem}[section]
\newtheorem{lemma}[theorem]{Lemma}
\newtheorem{corollary}[theorem]{Corollary}
\newtheorem{definition}[theorem]{Definition}
\newtheorem*{nonum}{}
\title{The Voter Basis and the Admissibility of Tree
Characters}
\author{Andrew Beveridge\footnote{Department of Mathematics, Statistics and Computer Science, Macalester College, Saint Paul, MN 55105} ~and Ian Calaway\footnote{Department of Economics, Stanford University, Stanford, CA 94305. \texttt{icalaway@stanford.edu}}}
\begin{document}

                % create a title page from the preamble info
\maketitle

%%%%%%%%%%%%%%%%%%%%%%%%%%%%%%%%%%%%%%
%
% abstract

\begin{abstract}
When making simultaneous decisions, our preference for the outcomes on one subset can depend on the outcomes on a disjoint subset. In referendum elections, this gives rise to the separability problem, where a voter must predict the outcome of one proposal when casting their vote on another. A set $S \subset [n]$ is separable for preference order $\succeq$ when our ranking of outcomes on $S$ is independent of outcomes on its complement $[n]-S$. The admissibility problem asks which characters $\ch \subset \powerset{[n]}$ can arise as the collection of separable subsets for some preference order. 
We introduce a linear algebraic technique to construct preference orders with desired characters. Each vector in our $2^n$-dimensional voter basis induces a simple preference ordering with nice separability properties. Given any collection $\ch \subset \powerset{[n]}$ whose subset lattice has a tree structure, we use the voter basis to construct a preference order with character $\ch$. 

%\keywords{referendum elections \and preference order \and separability \and admissibility \and poset}

\end{abstract}

%%%%%%%%%%%%%%%%%%%%%%%%%%%%%%%%%%%%%%

\section{Introduction}

Ranking sets of alternatives  has received widespread attention in the social sciences \cite{barbera}. For economists, interdependent consumer preferences  provide insight into which goods are complements or substitutes. Such information  helps vendors to choose inventory, or to design marketing materials and store layouts that  encourage cluster purchasing of interrelated items \cite{shocker}. 
Meanwhile, understanding the implications of preference interdependencies is critical in social choice theory \cite{lacy,hodge}. Interrelated preferences can result in problematic outcomes for referendum elections. A voter must cast their votes for multiple simultaneous proposals, so they are forced to guess the overall outcome when expressing their preferences. This encourages strategic voting, rather than expressing true preferences.  We contribute to the study of the \emph{admissibility problem} \cite{hodge3}, which seeks to characterize the achievable patterns of ranking interdependencies. 

A preference relation is an ordering $\succeq$ on the power set $\powerset{[n]}$. The relation $A \succ B$ means that we prefer outcome $A$ to outcome $B$, while $A \sim B$ means that we are indifferent between these two outcomes. 
In economics, the ground set $[n]=\{1,2,\ldots,  n\}$ could be a set of goods available in a store, where the outcome $A$ corresponds to a consumer's purchases on a particular shopping excursion. In social choice theory,  the ground set could be
a set of proposals in a referendum election, where the outcome $A$ corresponds to the ``yes'' votes of a given voter.

It can be convenient to replace the power set $\powerset{[n]}$ with $n$-dimensional binary space $\bitstrings{[n]} = \ZZ_2^n$, where the subset $A$ corresponds to the binary word $x = x_1 x_2 \cdots x_n$ such that $x_i =1$ when $i \in A$ (and $x_i =0$ otherwise). 
For example, the preference order
$$
\{ 1,2\} \succ \{1 \} \succ \emptyset \succ \{2\}, 
$$
 can  be written in binary notation as
\begin{equation}
 \label{eqn:pref-ex}
11 \succ 10 \succ 00 \succ 01.
\end{equation} 
The bitstring formulation 
is particularly suited for studying  multiple-criteria binary decision processes. Here are two situations where the above ranking corresponds to a reasonable individual's preferences.
First, consider a two-item shopping trip for burgers and buns. The ranking of equation \eqref{eqn:pref-ex} corresponds to the  preference order
\begin{center}
burgers and buns $\succ$ only burgers $\succ$ neither  $\succ$ only buns.
\end{center}
Second, this preference order could reflect a voter's preference for the outcome of a city referendum election, where the first proposal is whether to sponsor a new professional sports team and the second proposal is whether to build a new stadium.  This voter's least preferred outcome would be to build a new stadium without bringing a  team to play there.

We now formulate our notion of dependence and independence among outcomes. 
For sets $S,T \subset [n]$, let $T - S = \{ i : i \in T \mbox{ and } i \notin S \}$ denote the relative complement of $S$  in $T$. When $T=[n]$, we use $\comp{S}$ to denote the complement of $S$.
A subset $S \subset [n]$ is \emph{separable} with respect to preference order $\succeq$ when the individual's preferences for outcomes on $S$ are independent of the outcomes on $\comp{S}$. 
\begin{definition}
\label{def:separable}
The set $S \subset [n]$ is \emph{separable} with respect to $\succeq$ when for every $X,Y \subset S$ and every $Z \in \comp{S}$,
\begin{equation}
\label{eqn:sep-set}
X \succeq Y \Longleftrightarrow X \cup Z \succeq Y \cup Z.
\end{equation}
Otherwise, the set $S$ is \emph{nonseparable}.
\end{definition}
In other words, a set $S \subset [n]$ is \emph{separable} when preferences for outcomes on $S$ are independent of outcomes on $\comp{S}$; otherwise $S$ is \emph{nonseparable}.

In practice, nonseparable preferences can be problematic. Brams et al.~\cite{brams} showed that nonseparable preferences can lead to an election paradox where no voter's ballot matches the final outcome. Lacy and Niou  \cite{lacy} went further to show that the final outcome could be every voter's least favored result. Given their potential consequences, a mathematical understanding of the complexities of separable preferences is in order.
%Unfortunately, it seems unlikely that we can avoid preference interdependence: Hodge and TerHaar \cite{hodge3} showed that as the number of ballot questions increases, nearly all preferences are completely nonseparable, meaning that the only separable sets are $\emptyset$ and $[n]$. Given the complexity of the

A bitstring formulation of the separability condition \eqref{eqn:sep-set} will be particularly useful. 
A \emph{partial outcome} $x_S$ is a bitstring on $S$.  
We can write any  bitstring $x$ as the concatenation of partial outcomes $x=x_S x_{\comp{S}}$, where we allow ourselves to reorder the criteria as convenient. Let $0_T$ denote the all-zero outcome on $T \subset [n]$. The set $S$ is separable with respect to $\succeq$ when for every $x_S, y_S$ and $v_{\comp{S}}$,
\begin{equation}
\label{eqn:sep-outcome}
x_S 0_{\comp{S}} \succeq y_S 0_{\comp{S}} \Longleftrightarrow x_S v_{\comp{S}} \succeq y_S v_{\comp{S}}.
\end{equation}
In other words, the ranking of partial outcomes on $S$ is independent of the outcome on $\comp{S}$.

Note that $\emptyset$ and $[n]$ are vacuously separable for any preference ordering. Returning to our
 burgers-and-buns example  \eqref{eqn:pref-ex}, the shopper's preference for burgers is separable. Indeed, conditioning on the two possible outcomes for the  second item (buns), we have $11 \succ 01$ and $10 \succ 00$, which means that regardless of whether the store is out of buns, the shopper prefers buying burgers over not buying burgers. Meanwhile, her preference for buns is non-separable. Conditioning on the outcome for first item (burgers), we have $11 \succ 10$ and $00 \succ 01$.   If burgers are in stock, then she prefers to buy buns. However, if she cannot buy burgers, then her bun preference  flips: she would prefer buying nothing over buying buns alone.

\subsection{The Admissibility Problem}

The collection 
\begin{equation}
\label{eqn:char-def}
\character(\succeq)= \{ S \subset [n] : S \mbox{ is separable with respect to } \succeq \}
\end{equation}
 is called the \emph{character} of $\succeq$. The character for the preference order in equation \eqref{eqn:pref-ex} is
 $\character(\succeq) = \{\emptyset, \{ 1 \}, \{1,2\} \}$. 
 When $\character(\succeq)={\mathcal{
P}}([n])$, we say that $\succeq$ is \emph{completely separable}, and when $\character(\succeq)=\{\emptyset, [n]\}$, we say that $\succeq$ is \emph{completely nonseparable}. Both completely separable and completely non-separable preferences have been constructed for arbitrary $n$.
In particular, Hodge and TerHaar \cite{hodge3} showed that as $n \rightarrow \infty$, the probability that a randomly chosen preference order is completely non-separable tends to 1.

Completely separable preferences appear in the literature under various names \cite{oeis}. Indeed,  when every subset of $[n]$ is separable, we have a preference relation that satisfies
 de Finetti's axiom \cite{definetti}, namely that
\begin{equation*}
%\label{eqn:deFinetti}
A \preceq B \Longleftrightarrow A \cup C \preceq B \cup C \quad \mbox{when} \quad (A \cup B) \cap C = \emptyset.
\end{equation*}
%for all $A,B,C \in \powerset{[n]}$ such that $(A \cup B) \cap C = \emptyset$. 
Maclagan referred to orders satisfying de Finetti's axiom as  \emph{boolean term orders}  and studied their combinatorial and geometric properties \cite{maclagan}. In probability theory (where they enjoy applications in economics), they are known as  \emph{comparative probability orders} and  \emph{linear qualitative probabilities} \cite{kraft,fishburn,slinko}. For more on the structure and enumeration of completely separable preferences, see \cite{maclagan,bradley,christian}.

Generalizing the study of complete separability, Hodge and TerHaar \cite{hodge3} posed the \emph{admissibility problem}: which families of subsets $\ch \subset \powerset{[n]}$ are \emph{admissible}, meaning that there is  a preference order $\succeq$ on $\powerset{[n]}$ with $\character({\succeq}) = \ch$. 
Generating a wide range of separability patterns is valuable for both social choice theory and economics. Simulation of electorates with diverse separabilities is essential for measuring the  impact of nonseparability, and to test possible mitigation strategies. Simulation of  trading economies with nuanced dependency patterns within and between sectors is essential for understanding potential cascade effects between markets.

We survey recent work on  \emph{admissible characters}. 
An admissible character  must contain both $\emptyset$ and $[n]$, since each of these sets  trivially satisfies equation \eqref{eqn:sep-outcome}.   
Bradley, Hodge and Kilgour \cite{bradley} proved that  admissible characters are  closed under intersections. Hodge and TerHaar   \cite{hodge3} proved that this closure condition is sufficient for $n\leq3$, but not for larger $n$. When $n=4$ there is exactly one inadmissible character satisfying this intersection closure condition:
$$\{\emptyset,\{1, 2\},\{2\},\{2, 3\},\{3\},\{3, 4\},\{1, 2, 3, 4\}\}.$$
Hodge, Krines and Lahr \cite{hodge4} used preseparable extensions to construct certain classes of characters by recursively stitching together total orders on disjoint ground sets. For each of these characters $\ch$, there is at least one proper, nonempty $S$ such that both $S$ and $-S$ are in $\ch$.  Recently, Bjorkman, Gravelle and Hodge \cite{bjorkman} used Hamilton paths on the hypercube to generate orders called cubic preferences. The characters that they construct consist of nested subsets 
$\emptyset \subset S_1 \subset S_2 \cdots \subset S_k \subset [n]$.

%%%%%%%%%%%%
%\subsection{Our Results}

Herein, we extend the landscape of constructible characters by introducing a linear algebraic framework and using it to generate preferences with a tree structure of separable sets. Our admissible families are sublattices of the boolean lattice $\mathcal{B}_n$ of subsets of $[n]$ ordered by inclusion,  so we first recall some helpful poset terminology.
Set containment induces a  partial order on any character $\ch$. For $A,B \in \ch$, we define $A \prec B$ when $A \subsetneq B$. We say that $B$ \emph{covers} $A$ when $A \prec B$ and there is no $C \in \ch$ such that $A \prec C \prec B$.  The \emph{Hasse diagram} of a poset is an acyclic directed graph that has an edge from vertex $A$ to $B$ whenever $B$ covers $A$. The graph layout is drawn so that $B$ appears above all sets that it covers, so that all edges are oriented upwards. The unique maximal element is $[n] \in \ch$ and unique minimal element is $\emptyset \in \ch$.

%\begin{definition}
%\label{def:distributive-char}
%A \emph{distributive character} $\ch$ is a collection of subsets of $[n]$ that is closed under unions and intersections.
%\end{definition}
%
%
%\begin{theorem}
%\label{thm:distributive-character}
%Every distributive character is admissible.
%\end{theorem}

This brings us to our main result.

\begin{definition}
\label{def:tree-char}
A \emph{tree character} $\ch$ is a collection of subsets of $[n]$ such that the Hasse diagram of $\ch - \{ \emptyset \}$ is a tree rooted at $[n]$. In other words, if  $A$,$B\in \ch$ then one of the following is true: 
$$A=B, A\subsetneq B, A\supsetneq B, \mbox{ or } A\cap B=\emptyset.$$ 
\end{definition}

\begin{theorem}
\label{thm:tree-char}
Every tree character is admissible.
\end{theorem}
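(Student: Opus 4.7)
The plan is to proceed by induction on $|\ch|$, decomposing a tree character at its root. By the tree condition, the children $A_1, \ldots, A_k$ of $[n]$ in the Hasse diagram of $\ch - \{\emptyset\}$ are pairwise disjoint subsets of $[n]$; let $R = [n] - \bigcup_i A_i$ be the residue of elements that belong to no proper set in $\ch$. The collection $\ch_i$ of members of $\ch$ contained in $A_i$ is itself a tree character on the smaller ground set $A_i$, with $|\ch_i| < |\ch|$, so the induction hypothesis yields a preference order $\succeq_i$ on $\powerset{A_i}$ with character $\ch_i$. The base case $\ch = \{\emptyset, [n]\}$, as well as the auxiliary construction needed for $R$, amounts to producing a completely nonseparable order on a given ground set, which is known to exist at every size and is where we first expect to invoke the voter basis.

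The core of the proof is stitching the $\succeq_i$ and a completely nonseparable order on $R$ (when $R \neq \emptyset$) into a single preference order $\succeq$ on $\powerset{[n]}$ with character exactly $\ch$. This is where the voter basis does its main work: each $\succeq_i$ should be representable as a combination of voter basis vectors supported on $A_i$, the order on $R$ as a combination supported on $R$, and the target order $\succeq$ as an appropriately scaled sum of these pieces, possibly augmented by a \emph{coupling} basis vector whose support crosses between the $A_i$'s (and $R$). Such a construction makes each $A_i$ separable essentially by additive decomposition: outcomes on $\comp{A_i}$ shift the utility on $A_i$ by a constant, so the internal ordering on $\powerset{A_i}$ is preserved, and separability of the proper descendants of $A_i$ inside $\ch$ is then inherited from $\succeq_i$.

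The main obstacle is guaranteeing that the combined order has no extra separable sets. A naive additive combination $u = u_R + \sum_i u_i$ would falsely make every sibling union $A_{i_1} \cup \cdots \cup A_{i_j}$ separable, so the construction must incorporate a coupling that introduces dependence between siblings while not disturbing the separability of each individual $A_i$. Verification of the character then breaks into two kinds of checks: every $S \in \ch$ is separable in $\succeq$ (transparent, from the additive-across-the-tree structure), and every $S \notin \ch$ is nonseparable (less transparent, requiring an explicit witness to the failure of \eqref{eqn:sep-outcome} built from the completely nonseparable behavior on $R$ together with the coupling vector). Setting up this coupling so that both checks go through simultaneously --- and in particular handling the degenerate case $R = \emptyset$, where the coupling must come from reshuffling mass among the $A_i$'s themselves --- is the delicate technical step where the specific structure of the voter basis will be essential.
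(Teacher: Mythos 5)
There is a genuine gap: your proposal is a strategy outline whose ``delicate technical step'' --- designing the coupling so that every $S \in \ch$ stays separable while no $S \notin \ch$ becomes separable --- is deferred rather than carried out, and that step is essentially the entire content of the theorem. The paper does not induct on the tree at all. It writes down a single global vector
$\setvec{\ch}= \sum_{A \in \ch} \base^{g(A)} \setvec{A} + \sum_{B \in \linkage} \coeflink_B \setvec{B}$
and verifies its character directly, and each ingredient answers a question your sketch leaves open. First, the ``coupling'' cannot be a single basis vector crossing between the $A_i$'s: the paper needs one tiny term $\coeflink_B \setvec{B}$ for \emph{every} pairwise union $B = A_i \cup A_j$ of siblings, at \emph{every} level of the tree (not just at the root), where the siblings include a \emph{ghost child} collecting the residue elements at that level; the coefficients $\coeflink_B$ are distinct scaled powers of $2$ summing to less than $1$ so they act purely as tie-breakers and provably cannot create or destroy any other (non-)separability. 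Your remark that when $R=\emptyset$ the coupling must come from ``reshuffling mass among the $A_i$'s'' gestures at this but does not resolve it. Second, unions of non-siblings across branches (e.g.\ a grandchild of $A_1$ with a child of $A_2$) and non-constructible sets each require separate nonseparability witnesses; the paper's argument for non-siblings hinges on the coefficients being $\base^{g(A)}$ with $\base = 2 - 2^{-(n-1)}$ chosen so that $\sum_{i=r}^{s-1}\base^i < \base^s < \base^r + \sum_{i=r}^{s-1}\base^i$ --- taking $\base = 2$ breaks the argument. Nothing in your sketch supplies these witnesses.

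A further soft spot in the inductive framing itself: the induction hypothesis hands you an order on $\powerset{A_i}$ with character $\ch_i$, but separability of a set $S \subsetneq A_i$ inside the assembled order on $\powerset{[n]}$ is a statement about all of $\bitstrings{\comp{S}}$, not just about $\bitstrings{A_i - S}$; with a purely additive combination this is inherited, but once you add a coupling term whose support meets $A_i$ you must re-verify it, and conversely additivity alone does not let you conclude that a cross-branch union $B_1 \cup B_2$ (with $B_i \subsetneq A_i$ separable in $\succeq_i$) is nonseparable --- its status depends on the relative magnitudes of utility differences, which is exactly the quantitative control the paper's explicit coefficients provide. So the approach is not unreasonable as a heuristic for why trees should be admissible, but as written it does not constitute a proof.
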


Our method for constructing preference orderings with tree characters is particularly valuable for economic trade applications. Goods are frequently organized hierarchically, either by sector categorization or via clustering methods. Running  trade simulations with  preference dependencies are drawn from these hierarchies would help to measure the robustness of models where dependencies between goods cause cascading effects of economics shocks.

\subsection{The Preference Space $\PP^n$}
\label{sec:pref-space}

Our proof of Theorem  \ref{thm:tree-char} uses linear algebraic methods to construct a preference order with the desired character. We introduce the $2^n$-dimensional \emph{voter basis}, whose vectors induce preference orderings with nice separability properties. We believe that this flexible voter basis has the potential to significantly expand the set of known admissible characters, and perhaps more importantly,  to provide insight into the structure of completely separable preferences (aka comparitve probability orderings). 

We adopt the linear algebraic viewpoint by converting a preference order into a $2^n$-dimensional vector. 
Consider the \emph{preference space} $\PP^n \cong \QQ^{2^n}$ whose basis vectors are indexed by bitstrings from $\ZZ_2^n$ (or equivalently, by subsets of $[n]$).
We view a preference vector in $\PP^n$ as a utility function on  outcomes, where a higher value corresponds to a more preferred outcome.  Starting with a preference order $\succeq$, we construct the preference vector $\v_{\succeq}$ by setting the least preferred entry to 0 and then assigning the other utilities incrementally.
For example, the preference order in equation \eqref{eqn:pref-ex} corresponds to the preference vector
$$
\v_{\succeq} = 
\begin{bmatrix}
\entry{v}{11} \\
\entry{v}{10} \\
\entry{v}{01} \\
\entry{v}{00} \\
\end{bmatrix}
=
\begin{bmatrix}
3\\ 0 \\ 2 \\ 1
\end{bmatrix}.
$$
Conversely, any vector $\p \in \PP^n$ induces a preference ordering $\succeq_{\p}$ where we rank the outcomes  $x \succeq y$ whenever $\p(x) \geq \p(y)$.   For convenience, we define  $\character(\p) = \character(\succeq_{\p})$. 
Note that we have listed the entries of $\v_{\succeq}$  in reverse lexicographical order. This  aligns with two standard conventions for describing completely separable preferences: (a) the election outcome $11 \cdots 1$ is typically most preferred,  and  (b) the singleton outcomes satisfy
$$
100 \cdots 0 \succ 010 \cdots 0 \succ  \cdots \succ 000 \cdots 1.
$$
If not, we can remedy this situation by negating the failing questions to achieve (a)  and  then reordering the questions to achieve (b). 
%More precisely, we apply operations from the hyperoctahedral group $\mathbb{Z}_2\wr S_n$, the automorphism group of the hypercube $\mathbb{Z}_2^n$ to bring the preference order into standard form. 

Naturally, our preference construction hinges upon picking a useful basis for the preference space $\PP^n$. 
 We use hatted notation to denote the reverse bijection from $\ZZ_2^n$ to $\powerset{[n]}$:
\begin{equation}
\label{eqn:outset-def}
\outset{x} = \{ i \in [n] \mid x_i =1 \}.
\end{equation}
For example, $\outset{10110} = \{ 1, 3, 4 \}.$ We also define the parity indicator function on $\powerset{[n]}$
\begin{equation}
\label{eqn:even}
\even{S} = \left\{ \begin{array}{ll}
1 & \mbox{if } |S| \mbox{ is even,} \\
0 & \mbox{if } |S| \mbox{ is odd.} \\
\end{array}
\right.
\end{equation}

\begin{definition}
The \emph{voter basis} $\VV_n = \{\v_A \mid A \subset [n] \}$ is the collection of vectors whose entries $\v_A(x)$ are indexed by the outcomes $x \in \ZZ_2^n$ given by
\begin{equation}
\label{eqn:voterentry}
\entry{\setvec{A}}{x} 
= \even{\outset{x} \cap A}.
\end{equation}
\end{definition}

The voter basis  $\VV_3$ is shown in Table \ref{table:P3}. For each $A \in [n]$, the entries of $\v_A$ only take on two values: 0 and 1. Therefore, the preference ordering $\succeq_{\v_A}$ partitions $\powerset{[n]}$ into two equal parts: the preferred subsets and unpreferred subsets of $[n]$.
\begin{table}[ht]
$$
\begin{array}{ |c|c||c|c|c|c|c|c|c|c|c| } 
\hline
\mbox{subset} &   & \setvec{\{1,2,3\}} & \setvec{\{1,2\}} & \setvec{\{1,3\}} & \setvec{\{1\}} & \setvec{\{2,3\}} & \setvec{\{2\}} & \setvec{\{3\}} & \setvec{\emptyset}  \\ 
\hline
 &  \mbox{bitstring} & \setvec{111} & \setvec{110} & \setvec{101} & \setvec{100} & \setvec{011} & \setvec{010} & \setvec{001} & \setvec{000}  \\ 
  \hline
  \hline
\{1,2,3\}& 111 & 0 & 1 & 1 & 0 & 1 & 0 & 0 & 1 \\ 
\{1,2\}& 110 & 1 & 1 & 0 & 0 & 0 & 0 & 1 & 1 \\ 
\{1,3\}& 101 & 1 & 0 & 1 & 0 & 0 & 1 & 0 & 1 \\ 
\{1 \}& 100 & 0 & 0 & 0 & 0 & 1 & 1 & 1 & 1 \\ 
\{2,3 \}& 011 & 1 & 0 & 0 & 1 & 1 & 0 & 0 & 1 \\ 
\{2 \}& 010 & 0 & 0 & 1 & 1 & 0 & 0 & 1 & 1 \\ 
\{3\}& 001 & 0 & 1 & 0 & 1 & 0 & 1 & 0 & 1 \\
\emptyset & 000 & 1 & 1 & 1 & 1 & 1 & 1 & 1 & 1 \\
 \hline
\end{array}
$$
\caption{The voter basis for $\PP^3$.}
\label{table:P3}
\end{table}
Along with their simple structure, the voter basis vectors have nice separability properties.
\begin{theorem}
\label{thm:voter-basis}
The voter basis $\VV_n$ has the following properties:
\begin{enumerate}
\item[(a)]
$\VV_n$  is a basis for $\PP^n$. 
\item[(b)] 
The preference ordering $\succeq_{\v_A}$ induced by basis vector $\v_A$ is separable on $S \subset [n]$ if and only if $A \subseteq S$ or $A \cap S = \emptyset.$ Equivalently,
$$
\character(\v_A) = \{ S \mid A \subset S \mbox{ or } A \cap S = \emptyset \}.
$$ 
\end{enumerate}
\end{theorem}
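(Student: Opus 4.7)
For part (a), I would recognize the voter vectors as an affine shift of the Walsh--Hadamard characters of $\ZZ_2^n$. Since $\even{k} = \tfrac{1}{2}(1+(-1)^k)$, defining $\chi_A(x) := (-1)^{|\outset{x}\cap A|}$ gives
\[
\v_A \;=\; \tfrac{1}{2}\bigl(\v_\emptyset + \chi_A\bigr),
\]
where $\v_\emptyset$ is the all-ones vector (since $|\outset{x}\cap\emptyset|=0$ is even). The characters $\{\chi_A \mid A\subseteq[n]\}$ are well known to form an orthogonal basis of $\QQ^{2^n}$ (they are the rows of the order-$2^n$ Hadamard matrix). The inverse change of basis is $\chi_\emptyset = \v_\emptyset$ and $\chi_A = 2\v_A - \v_\emptyset$ for $A\neq\emptyset$, so $\VV_n$ is likewise a basis.

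For part (b), I would fix $S\subseteq[n]$ and split $A = A_S \sqcup A_{\comp{S}}$ with $A_S := A\cap S$. The central computation is the additivity of parity across this split: for $x = x_S x_{\comp{S}}$,
\[
|\outset{x}\cap A| \;=\; |\outset{x_S}\cap A_S| + |\outset{x_{\comp{S}}}\cap A_{\comp{S}}|,
\]
so $\v_A(x) = 1$ precisely when these two counts share the same parity. The rest of the argument consists of three cases tied to the separability condition \eqref{eqn:sep-outcome}.

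If $A\subseteq S$, then $A_{\comp{S}}=\emptyset$ and $\v_A(x)$ depends only on $x_S$, so the equivalence \eqref{eqn:sep-outcome} is automatic. If $A\cap S = \emptyset$, then symmetrically $\v_A(x)$ depends only on $x_{\comp{S}}$, so for every fixed $v_{\comp{S}}$ all partial outcomes $x_S v_{\comp{S}}$ are tied under $\succeq_{\v_A}$, and the separability equivalence holds vacuously. Otherwise $A_S$ and $A_{\comp{S}}$ are both nonempty; picking $a\in A_S$, $b\in A_{\comp{S}}$ together with $y_S = 0_S$, $x_S$ equal to the indicator of $\{a\}$, and $v_{\comp{S}}$ the indicator of $\{b\}$ yields $\v_A(y_S 0_{\comp{S}})=1 > 0 = \v_A(x_S 0_{\comp{S}})$ but $\v_A(x_S v_{\comp{S}})=1 > 0 = \v_A(y_S v_{\comp{S}})$, which breaks \eqref{eqn:sep-outcome}.

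No step is genuinely hard once the parity factorization is in hand; the only point requiring care is the indifference case $A\cap S=\emptyset$, where one must notice that universal ties vacuously satisfy separability rather than obstruct it.
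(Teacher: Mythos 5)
Your proposal is correct and follows essentially the same route as the paper: for (a), both arguments rest on the identity $\chi_A = 2\v_A - \v_{\emptyset}$ relating the voter vectors to the $\pm 1$ character vectors (the paper proves invertibility of that character matrix by an inductive determinant computation in Lemma~\ref{lem:martin}, where you instead cite the standard orthogonality of the Walsh--Hadamard characters); for (b), your three cases and the witness outcomes $1_a$, $1_b$ in the nonseparable case match the paper's Cases 1--3 exactly. The one wording quibble is that in the case $A\cap S=\emptyset$ the separability biconditional is not \emph{vacuously} true but holds because universal indifference makes both sides of \eqref{eqn:sep-outcome} true --- the paper packages this as ``trivial separability'' (Lemma~\ref{lemma:trivial-comp}) --- which does not affect correctness.
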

While we do not take up the question of completely separable preferences in this current work, we are hopeful that the voter basis will be useful for illuminating this important (and difficult) family of linear orders. The voter basis also has deep connections to representation theory:  that we originally developed $\VV_n$ using representation theory for the hyperoctahedral group $\mathbb{Z}_2\wr S_n$. To maintain the focus of this exposition, we defer those connections to future work, and instead provide an elementary proof that $\VV_n$ is a basis for $\PP^n$. 

%%%%%% vector representations
We conclude this section by drawing connections to previous research that employs vector representations in the study of election preferences. Hodge and Klima \cite{hodge2} represent a strict preference order of  as a column vector of bitstrings, with the voter's $i$th preference appearing in the $i$th row. Treating each row as a vector in $\ZZ_2^n$, we obtain a ${2^n \times n}$  \emph{binary preference matrix}. For example, the preference order of
equation \eqref{eqn:pref-ex} corresponds to the $4 \times 2$ binary preference matrix
$$
\begin{bmatrix}
1 & 1 \\ 1 & 0 \\ 0& 0 \\ 0 &1
\end{bmatrix}.
$$
This representation has proven quite useful in many of the constructions mentioned above. As a side note, the absence of an algebraic structure for these matrices was part of the motivation for our definition of the preference space $\PP^n$.  Looking at election outcomes more globally, Daughtery et al.~\cite{daugherty} introduced the \emph{profile space} $M^n \cong \QQ^{n!}$ to decompose an election according to the actual ballots cast. For example, a ballot for a ranked choice election with $n$ candidates corresponds to a permutation of $[n]$. Using a basis $\{ \v_{\sigma} \mid \sigma \in S_n \}$, where we view $\sigma \in S_n$ a linear ordering of $[n]$, the collection of voter ballots corresponds to the linear combination
$\sum_{\sigma \in S_n} a_{\sigma} \v_{\sigma}$ where $a_{\sigma}$ is the number of ballots cast with candidate ranking $\sigma$. 
To capture such aggregate behavior of the electorate in the preference space $\PP^n$, we would create a linear combination of the preference vectors across the electorate. Simplifying would give a single preference vector that captures the overall utility score for each election outcome. 
Finally, we note that 
a preference vector $\v \in \PP^n$ is equivalent to the \emph{value function} as defined in Bradley et al. \cite{bradley}, though our  vector space viewpoint is crucial to the methods herein.

\subsection{Roadmap}

The remainder of this paper is organized as follows. 
Section \ref{chap:preference} develops the voter basis. We   prove Theorem \ref{thm:voter-basis}, consider the separability properties of voter basis vectors, and introduce some helpful notation for  describing and combining partial outcomes. We then introduce a rank function on $\powerset{[n]}$ and use it to construct a nonseparable vector whose coefficients will be useful later on.
 In Section \ref{sec:trees}, we prove Theorem \ref{thm:tree-char}: tree characters are admissible.
 We conclude in Section \ref{chap:conclusion}, suggesting some directions for future research.

%%%%%%%%%%%%%%%%%%%%%%%%%%%%%%%%%%%%%%%%%
%
% Preference Space
%

\section{The Voter Basis}
\label{chap:preference}

In this section,  we  formulate and prove some elementary results using bitstring notation,
and  prove Theorem \ref{thm:voter-basis},
%We encourage the reader to use the short proofs to acquaint themselves with the general flow of the bitstring proofs that follow. 
%Next, we prove Theorem \ref{thm:voter-basis} and explore additional properties of the voter basis. 
Let $X_S$ denote the set of all bitstrings on $S \subset [n]$. Taking $S=[n]$, we define 
$\bitstrings{[n]} = \ZZ_2^n$ to be the set of all possible outcomes. Similarly, we define $\bitstrings{S}$ to be the set of all partial outcomes on the subset $S$. 
The simplest preference order on $S $ arises when  a voter is indifferent between all the outcomes.

\begin{definition}
A set $S$ is  {\bf trivially separable} with respect to $\succeq$ if for all $x_S, y_S\in \bitstrings{S}$ and all $u_{\comp{S}}\in \bitstrings{\comp{S}}$, we have
\begin{equation*}
    x_Su_{\comp{S}} \sim y_Su_{\comp{S}}.
\end{equation*}
\end{definition}

\begin{lemma}
\label{lemma:trivial-comp}
If $S$ is trivially separable then $\comp{S}$ is separable.
\end{lemma}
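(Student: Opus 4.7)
The plan is to unpack the definitions and use the trivial separability of $S$ twice, once for each of the two $\comp{S}$-components appearing in the separability test for $\comp{S}$. Since trivial separability of $S$ means precisely that any two full outcomes agreeing on $\comp{S}$ are indifferent, the separability condition for $\comp{S}$ should follow by transitivity of $\succeq$ through indifferent pairs.

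In more detail: fix arbitrary $x_{\comp{S}}, y_{\comp{S}} \in \bitstrings{\comp{S}}$ and an arbitrary $v_S \in \bitstrings{S}$. Applying the trivial separability of $S$ with $u_{\comp{S}} = x_{\comp{S}}$ to the two partial outcomes $0_S$ and $v_S$ yields $0_S x_{\comp{S}} \sim v_S x_{\comp{S}}$. Applying it again with $u_{\comp{S}} = y_{\comp{S}}$ yields $0_S y_{\comp{S}} \sim v_S y_{\comp{S}}$. Combining these two indifferences with the definition of $\succeq$ as a weak order, one concludes
\[
0_S x_{\comp{S}} \succeq 0_S y_{\comp{S}} \Longleftrightarrow v_S x_{\comp{S}} \succeq v_S y_{\comp{S}},
\]
which (after reading $\comp{S}$ in the role of $S$ in Definition \ref{def:separable}) is exactly the separability condition for $\comp{S}$.

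The only real obstacle is notational bookkeeping: one must be careful to swap the roles of $S$ and $\comp{S}$ when invoking the separability definition, and to note that the equivalence $x_S u_{\comp{S}} \sim y_S u_{\comp{S}}$ (for all $x_S, y_S$) supplied by trivial separability of $S$ is exactly the statement that the utility of an outcome depends only on its $\comp{S}$-coordinates. Once this observation is made, the lemma reduces to a one-line transitivity argument.
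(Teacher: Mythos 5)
Your proof is correct and follows essentially the same route as the paper's: both arguments sandwich the comparison of two outcomes on $\comp{S}$ between the indifferences $x_S u_{\comp{S}} \sim y_S u_{\comp{S}}$ supplied by trivial separability and conclude by transitivity. The only cosmetic difference is that you anchor one side at the all-zero partial outcome $0_S$ (matching the formulation in equation \eqref{eqn:sep-outcome}), while the paper compares two arbitrary partial outcomes on $S$.
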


\begin{proof}
Consider  $x_S, y_S \in \bitstrings{S}$ and $u_{\comp{S}}, v_{\comp{S}} \in \bitstrings{\comp{S}}$. Suppose that $x_S u_{\comp{S}} \succeq x_S v_{\comp{S}}$. Then
$$y_S u_{\comp{S}} \sim  x_S u_{\comp{S}} \succeq x_S v_{\comp{S}} \sim  y_S v_{\comp{S}},$$
so that $y_S u_{\comp{S}} \succeq y_S v_{\comp{S}}$.
In other words, our preference on the outcomes on $\comp{S}$ is independent of the outcome on $S$.
\qed \end{proof}

Bradley, Hodge and Kilgour \cite{bradley} showed that set intersections preserve separability. We include a proof as an opportunity  to acquaint the reader with the general flow of the bitstring proofs that follow.

\begin{lemma}
\label{lemma:intersect}{\cite{bradley}}
If $S$ and $T$ are separable with respect to $\succeq$, then so is $S \cap T$.
\end{lemma}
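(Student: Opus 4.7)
The plan is to decompose the ground set into four disjoint blocks according to membership in $S$ and $T$: let $A = S\cap T$, $B = S - T$, $C = T - S$, and $D = [n] - (S \cup T)$. Any full outcome then factors as a concatenation $x_A x_B x_C x_D$. Under this decomposition, $\comp{S} = C \cup D$ and $\comp{T} = B \cup D$, while $S = A \cup B$ and $T = A \cup C$. The separability hypothesis for $S$ says that we may freely swap the outcome on $C \cup D$ whenever the two sides of a comparison share the same outcome on $A \cup B$; similarly for $T$. Here I am using the ``swap-to-arbitrary'' form of \eqref{eqn:sep-outcome}, which follows from the stated all-zero form by composing two instances through the intermediate filler $0_{\comp{S}}$.

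To verify separability of $S \cap T = A$, I would start from an arbitrary comparison $x_A u_B u_C u_D \succeq y_A u_B u_C u_D$ and reach $x_A v_B v_C v_D \succeq y_A v_B v_C v_D$ by two applications of the hypotheses. First, invoke separability of $S$: the two sides agree on $A \cup B$, so the outcome on $\comp{S} = C \cup D$ may be swapped from $u_C u_D$ to $v_C v_D$, yielding $x_A u_B v_C v_D \succeq y_A u_B v_C v_D$. Second, invoke separability of $T$: the two sides of this new comparison agree on $A \cup C$, so the outcome on $\comp{T} = B \cup D$ may be swapped from $u_B v_D$ to $v_B v_D$. This produces the desired $x_A v_B v_C v_D \succeq y_A v_B v_C v_D$, and the reverse implication is symmetric.

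I do not foresee a genuine obstacle in this proof. The only thing that requires real care is the bookkeeping: in each step, the two outcomes being compared must agree on the entire separable set whose separability is being invoked. Writing partial outcomes with explicit $A, B, C, D$ subscripts makes this immediately checkable, and the rest is a direct unpacking of Definition \ref{def:separable}.
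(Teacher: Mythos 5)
Your proof is correct and is essentially the paper's own argument: the same four-block decomposition into $S\cap T$, $S-T$, $T-S$, and $\comp{(S\cup T)}$, and the same two swaps (first the $\comp{S}=C\cup D$ part via separability of $S$, then the $S-T$ part via separability of $T$), each displayed step being a valid instance of \eqref{eqn:sep-outcome}. One phrasing slip worth fixing: the side condition for invoking separability of $S$ is that the two outcomes being compared \emph{agree on $\comp{S}=C\cup D$} (the shared filler being swapped), not that they ``agree on $A\cup B$'' --- they deliberately differ on $A$ --- but your displayed comparisons do satisfy the correct condition, so the argument goes through unchanged.
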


\begin{proof}
We partition each bitstring $z$ as 
$$z = z_{S \cap T} \, z_{\comp{S \cap T}} = z_{S \cap T}  z_{S - T} \, z_{T - S} \, z_{\comp{S \cup T}}.$$
Suppose that $x_{S \cap T} u_{\comp{S \cap T}} \succeq y_{S \cap T} u_{\comp{S \cap T}}$, and let $v_{\comp{S \cap T}}$ be any other bitstring on $\comp{S \cap T}$.
Then
\begin{align*}
x_{S \cap T} u_{\comp{S \cap T}} &\succeq y_{S \cap T} u_{\comp{S \cap T}} \\
(x_{S \cap T} u_{S - T}) u_{T - S} u_{\comp{S \cup T}} &\succeq 
(y_{S \cap T} u_{S - T}) u_{T - S} u_{\comp{S \cup T}} \\
(x_{S \cap T} u_{S - T}) v_{T - S} v_{\comp{S \cup T}} &\succeq 
(y_{S \cap T} u_{S - T}) v_{T - S} v_{\comp{S \cup T}} &\mbox{since } S \mbox{ is separable} \\
(x_{S \cap T} v_{T - S} ) u_{S - T}  v_{\comp{S \cup T}} &\succeq 
(y_{S \cap T} v_{T - S}) u_{S - T}  v_{\comp{S \cup T}} \\
(x_{S \cap T} v_{T - S} ) v_{S - T}  v_{\comp{S \cup T}} &\succeq 
(y_{S \cap T} v_{T - S}) v_{S - T}  v_{\comp{S \cup T}} &\mbox{since } T \mbox{ is separable}  \\
x_{S \cap T} v_{\comp{S \cap T}} &\succeq y_{S \cap T} v_{\comp{S \cap T}}.
\end{align*}
Therefore $S \cap T$ is separable with respect to $\succeq$.
\qed \end{proof}

%%%%%%%%%%%
%\subsection{The Voter Basis}

\label{sec:basis}

We now proving that $\VV_n$ is a basis for the preference space $\PP^n$, and then investigate the separability properties of a voter basis vector $\v_S$.
The following elementary lemma, suggested to us by Jeremy Martin  \cite{martin},  leads to a quick proof that $\VV_n$ is a basis. 
%Our original proof used representation theory for the hyperoctahedral group $\mathbb{Z}_2\wr S_n$, the automorphism group of the hypercube $\mathbb{Z}_2^n$; see \cite{halverson} for further investigation of $\mathbb{Z}_2\wr S_n$ and separability of preference orders.

\begin{lemma}
\label{lem:martin}
Let $W_n$ be the $2^n \times 2^n$ matrix whose entries are indexed by subsets of $[n]$ and whose $(S,T)$-th entry is $$W_n(S,T) = (-1)^{| S \cap T|}.$$ Then $\det(W_1)=-2$ and  $\det(W_n) = 2^{n 2^{n-1}}$ for $n \geq 2$.
\end{lemma}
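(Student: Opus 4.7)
The plan is to recognize $W_n$ as an $n$-fold Kronecker product of the $2\times 2$ matrix $W_1$ and then apply the standard determinant formula for tensor products. The key identity is that, if we index rows and columns by bitstrings $s,t\in \ZZ_2^n$ rather than subsets, then
\[
W_n(s,t) \;=\; (-1)^{|\hat s \cap \hat t|} \;=\; \prod_{i=1}^n (-1)^{s_i t_i} \;=\; \prod_{i=1}^n W_1(s_i, t_i).
\]
Ordering the bitstrings lexicographically (so that the first coordinate varies slowest), this product formula is precisely the statement that $W_n = W_1 \otimes W_{n-1}$.

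The base case $n=1$ is immediate: $W_1 = \bigl(\begin{smallmatrix}1 & 1 \\ 1 & -1\end{smallmatrix}\bigr)$, which gives $\det(W_1) = -2$. For the inductive step, I would invoke the classical identity $\det(A \otimes B) = (\det A)^k (\det B)^m$ when $A$ is $m\times m$ and $B$ is $k\times k$. Applied with $A = W_1$ (so $m=2$) and $B = W_{n-1}$ (so $k=2^{n-1}$), this gives the recurrence
\[
\det(W_n) \;=\; \det(W_1)^{2^{n-1}} \cdot \det(W_{n-1})^2 \;=\; (-2)^{2^{n-1}} \det(W_{n-1})^2.
\]
A direct induction then closes the argument: assuming $\det(W_{n-1}) = 2^{(n-1)2^{n-2}}$ for $n \geq 3$ (or using $\det(W_1)^2 = 4$ when $n=2$), the recurrence yields
\[
\det(W_n) \;=\; 2^{2^{n-1}} \cdot 2^{(n-1)2^{n-1}} \;=\; 2^{n\,2^{n-1}},
\]
since $2^{n-1}$ is even for $n \geq 2$, so the sign $(-1)^{2^{n-1}}$ is $+1$ and the square $\det(W_{n-1})^2$ is positive regardless.

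The only subtlety is bookkeeping on the sign: one must verify that the Kronecker identification respects the ordering on subsets chosen for $W_n$, and one must confirm that the factor $(-2)^{2^{n-1}}$ collapses to $+2^{2^{n-1}}$ for every $n \geq 2$. Both are routine, so I do not expect any real obstacle --- the main work is just setting up the Kronecker decomposition cleanly, after which the recurrence and induction are a short calculation.
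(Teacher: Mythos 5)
Your proposal is correct and follows essentially the same route as the paper: the Kronecker factorization $W_n = W_1 \otimes W_{n-1}$ is exactly the paper's block decomposition $W_n = \left[\begin{smallmatrix} W_{n-1} & W_{n-1} \\ W_{n-1} & -W_{n-1}\end{smallmatrix}\right]$ (under the same ordering, subsets of $[n-1]$ followed by subsets containing $n$), and it yields the identical recurrence $\det(W_n) = (-2)^{2^{n-1}}\det(W_{n-1})^2$. The only cosmetic difference is that you invoke the standard identity $\det(A\otimes B) = (\det A)^{k}(\det B)^{m}$ where the paper performs the block row operations explicitly.
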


\begin{proof}
We have 
$$
W_1 = 
\left[
\begin{array}{cc}
1& 1 \\
1 & -1
\end{array}
\right]
\qquad
\mbox{and}
\qquad
W_2 = 
\left[
\begin{array}{cc|cc}
1& 1 & 1 & 1 \\
1 & -1 & 1 & -1 \\
\hline
1 & 1 & -1 & -1 \\
1 & -1 & -1 & 1
\end{array}
\right] = \left[
\begin{array}{c|c}
W_1 & W_1 \\
\hline
W_1 & -W_1 \\
\end{array}
\right],
$$
where we have used the  ordering $(\emptyset, \{1 \} )$ and $(\emptyset, \{1 \} ,  \{2 \}, \{1,2 \} )$ for the rows and columns of $W_1$ and $W_2$, respectively.
Clearly, $\det(W_1) = -2$. Elementary row operations on $W_2$ yield
$$
W_2 = \left[
\begin{array}{c|c}
W_1 & W_1 \\
\hline
W_1 & -W_1 \\
\end{array}
\right]
\sim
\left[
\begin{array}{c|c}
W_1 & W_1 \\
\hline
0 & -2W_1 \\
\end{array}
\right]
\sim
\left[
\begin{array}{c|c}
W_1 & 0 \\
\hline
0 & -2W_1 \\
\end{array}
\right]
$$
and these row addition operations do not change the determinant. Therefore $\det(W_2) = (-2)^2 \det(W_1)^2 = 2^4$. The same matrix structure holds for $W_n$ in terms of $W_{n-1}$, where we order by subsets of $[n-1]$ followed by subsets containing element $n$. Induction gives
$$
\det(W_n) = (-2)^{2^{n-1}} \det(W_{n-1})^2 = 2^{2^{n-1}} \left( 2^{(n-1)2^{n-2}} \right)^2
= 2^{n 2^{n-1}}.
$$
\qed \end{proof}

\begin{proof}[of Theorem \ref{thm:voter-basis}(a)]
Let $\w_S$ denote the column of $W_n$ indexed by $S \subset [n]$. Observe that 
 $\w_S = 2 \v_S - \v_{\emptyset}$ where $v_{\emptyset} = \one$ is the all-ones vector.
 Therefore $\VV_n$ is a basis for $\PP^n$.
\qed \end{proof}

It is important to note that the row/column order in this proof is different from the order displayed in Table \ref{table:P3}, which adheres to preference relation conventions. The recursive ordering is essential for the inductive proof. Also,
we could have used the $\w_S$ vectors as our basis, but the plentiful zeros of the $\v_S$ vectors  simplify the arguments below. 
Next, we introduce some terminology and notation for outcomes.

\begin{definition}
The  outcome $x$ is \emph{even (odd)} when  the size of the corresponding set $| \outset{x} |$ is even (odd).
The outcome $x$ is \emph{even in $A$ (odd in $A$)} when $|\outset{x} \cap A|$ is even (odd). 
Furthermore,  $x$ is even in $A$ if and only if the vector entry $\setvec{A}(x)=1$.
\end{definition}
%Note that $x$ is even in $A$ if and only if $\setvec{A}(x)=1$. 
For example, consider the set $\bitstrings{[3]} = \ZZ_3^2.$ The outcomes 011, 010, 001, and 000 are even in $\{1\}$ whereas the remaining four outcomes are odd in $\{1 \}$. The outcomes $000$, $001$, $110$ and $111$ are even in $\{1,2\}$, while the other four outcomes are odd in $\{1,2\}$.

Each voter basis vector $\setvec{A}$ induces a preference ordering  in which outcomes that are  even in $A$ are preferred to outcomes that are odd in $A$. The vector $\setvec{\emptyset}$ induces the trivial preference ordering (complete indifference). The vector $\setvec{[n]}$ prefers the even subsets of $[n]$ over the odd subsets. More generally, for nonempty $A \subset [n]$, the vector $\setvec{A}$ partitions the outcomes into a set of $2^{n-1}$ preferred outcomes and a set of $2^{n-1}$ undesirable outcomes. For example, when $n=3$, the voter basis vector $\setvec{\{1\}}=\setvec{100}$ induces the ordering
$$
\{ 011, 010, 001, 000 \} \succ \{111, 110, 101, 100 \}.
$$
When $n=4$, the vector $\setvec{\{1,2\}} = \setvec{1100}$ induces the ordering
\begin{align*}
& \{ 1111, 1110, 1101, 1100, 0011, 0010, 0001, 0000 \} \\
& \succ \{1011, 1010, 1001, 1000, 0111, 0110, 0101, 0100 \}.
\end{align*}

The following notation streamlines our nonseparability proofs.
Let $S$ be a set that we want to prove is nonseparable. 
We let $1_i$ and $0_i$ denote that the outcome of element $i \in S$ is fixed as 1 or 0, respectively on element $i \in S$. 
We let $0_*$ denote the partial outcome  that is all-zero on elements in $S$ that have not already been specified. 
For example, if $S= \{1,2,3,4,5\}$ then $x_S = 1_2 0_*$ denotes the outcome $01000$.
In the proofs below, we will often use this notation to construct sparse partial outcomes $x_S, y_S$ and $u_{\comp{S}}, v_{\comp{S}}$ so that $x_S u_{\comp{S}} \succ y_S u_{\comp{S}}$ while $x_S v_{\comp{S}} \prec y_S v_{\comp{S}}$.
As an example of the four resulting outcomes, suppose that $n=6$ and let $S=\{1,2,3\}$. 
Consider the partial outcomes $x_{S}=0_*$ and $y_{S}=1_20_*$ on $S$ and the partial outcomes
$u_{\comp{S}}=0_*$ and $v_{\comp{S}}=1_50_*$ on $\comp{S} = \{ 4,5,6\}$.
Concatenating each pairing gives 
$$
\begin{array}{rcl}
x_Su_{\comp{S}} &=& 000000, \\ 
y_Su_{\comp{S}} &=& 010000, \\ 
x_Sv_{\comp{S}} &=& 000010,\\ 
y_Sv_{\comp{S}} &=& 010010.\\ 
\end{array} 
$$

We are now ready to prove Theorem \ref{thm:voter-basis}(b): the vector $\setvec{A}$ induces a preference order that  is separable on $S$ if and only if $A\subseteq S$ or $A\cap S=\emptyset$.
Applying this theorem  for  $n=4$, the preference ordering induced by $\setvec{\{1\}}$ (or any other singleton set)  induces a completely separable ordering. The preference ordering induced by
$\setvec{\{ 1,2 \}}$ has character
$$
\{  \emptyset, \{3\}, \{4\}, \{1,2\}, \{3,4\}, \{1,2,3\}, \{1,2,4\}, \{1,2,3,4\} \}.
$$
Finally, $\setvec{\{1,2,3,4\}}$ induces a completely nonseparable ordering on $[4]$. 
As these examples show, the voter basis vectors have very useful separability properties.  Theorem \ref{thm:voter-basis} (b) reveals the potential of these basis vectors as building blocks for constructing preference orders. In particular, the nonseparable properties of $\setvec{[n]}$ will be essential for removing unwanted separabilities.

\begin{proof}[of Theorem \ref{thm:voter-basis}(b)]
Given $S \subset [n]$, let $x$ and $y$ be outcomes that are identical on $\comp{S}$. There are three cases to consider; we handle the two separable cases first.

\textbf{Case 1: $A\subseteq S$.} 
We decompose  $x=x_Su_{\comp{S}}=x_Ax_{S-A}u_{\comp{S}}$ and $y=y_Su_{\comp{S}}=y_Ay_{S-A}u_{\comp{S}}$.
We claim that preference relation between these outcomes  is independent of the shared binary digits  $u_{\comp{S}}$.
Indeed, if $x_A$ and $y_A$ are the same parity, then  both or neither are even in $A$, so that 
$\entry{\setvec{A}}{x_Su_{\comp{S}}} = \entry{\setvec{A}}{y_Su_{\comp{S}}}$
for all $u_{\comp{S}}\in X_{\comp{S}}$.
If  $x_A$ and $y_A$ are not the same parity, then we may assume that $x_A$ is even and $y_A$ is odd, so that
$\entry{\setvec{A}}{x_S u_{\comp{S}}} = 1 > 0 = \entry{\setvec{A}}{y_S u_{\comp{S}}}$
for all $u_{\comp{S}}\in X_{\comp{S}}$. Either way,  the preference between outcomes $x$ and $y$ depends only on the parities of $x$ and $y$ in $A$, which is independent of  $u_{\comp{S}}$. Therefore, $S$  is separable on $\setvec{A}$ whenever $A \subset S$.

\textbf{Case 2:  $A\cap S=\emptyset$.}
We decompose $x$ and $y$ as
$x=x_Su_Au_{\comp{S}-A}$ and $y=y_Su_Au_{\comp{S}-A}$.
The outcomes are identical on $A$, so their parity in $A$ is the same. Therefore
$\entry{\setvec{A}}{x_Su_Au_{\comp{S}-A}} =  \entry{\setvec{A}}{y_Su_Au_{\comp{S}-A}}$ 
for all $u_{\comp{S}}\in X_{\comp{S}}$ which means that $S$ is trivially separable on $\setvec{A}$.

\textbf{Case 3: $S \cap A \neq \emptyset$ and $A-S \neq \emptyset$.} Note that this includes the case where $\emptyset \subsetneq S \subsetneq  A$. 
We construct a pair of outcomes on $S$ that certify that $S$ is not separable.
Let $s \in S \cap A$ and let $a \in A - S$. Let $x_S = 0_*$ be the all-zero outcome and let $y_S = 1_s 0_*$ be the singleton outcome on $s$. Now let $u_{\comp{S}} = 0_*$ be the all-zero outcome and $v_{\comp{S}} = 1_a 0_*$ be the singleton outcome on $a$. We have
\begin{align*}
\v_A(x_S u_{\comp{S}}) &= 1 > 0 = \v_A(y_S u_{\comp{S}}) \\
\v_A(x_S v_{\comp{S}}) &= 0 < 1 = \v_A(y_S v_{\comp{S}}), 
\end{align*}
so that our preference between $x_S$ and $y_S$ depends on the outcome on $\comp{S}$. Therefore the set $S$ is not separable.
\qed \end{proof}

We conclude this subsection with a trio of results concerning the entries of $\setvec{T}$. The corrolaries will be used frequently in the next section to construct preference vectors with desired properties.

\begin{lemma}
\label{overlap}
Let $S, T \subset [n]$. Consider  outcomes $x = x_S u_{\comp{S}}$ and $y= y_S u_{\comp{S}}$ that agree on $\comp{S}$. We have  $\entry{\setvec{T}}{x}=\entry{\setvec{T}}{y}$ if and only if the partial outcomes $x_{S \cap T}$ and $y_{S \cap T}$ have the same parity.
\end{lemma}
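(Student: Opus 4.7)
The plan is to unwind the definition of $\entry{\setvec{T}}{x}$ and isolate the contribution from the shared coordinates $\comp{S}$, so that the comparison collapses onto the overlap $S \cap T$.

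First I would recall that $\entry{\setvec{T}}{x} = \even{\outset{x} \cap T}$, so $\entry{\setvec{T}}{x} = \entry{\setvec{T}}{y}$ if and only if $|\outset{x} \cap T|$ and $|\outset{y} \cap T|$ have the same parity. Since $S$ and $\comp{S}$ partition $[n]$, I can decompose
\[
\outset{x} \cap T \; = \; \bigl(\outset{x_S} \cap T\bigr) \;\sqcup\; \bigl(\outset{u_{\comp{S}}} \cap T\bigr),
\]
and analogously for $y$. The second piece is the same for both $x$ and $y$ because they share the partial outcome $u_{\comp{S}}$, so it contributes equally to both parities and can be cancelled. Thus the two parities agree iff $|\outset{x_S} \cap T|$ and $|\outset{y_S} \cap T|$ have the same parity.

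Next I would observe that $\outset{x_S} \subseteq S$, so $\outset{x_S} \cap T = \outset{x_S} \cap (S \cap T) = \outset{x_{S \cap T}}$, and similarly $\outset{y_S} \cap T = \outset{y_{S \cap T}}$. Combining these simplifications gives exactly the claim: $\entry{\setvec{T}}{x} = \entry{\setvec{T}}{y}$ iff $|\outset{x_{S \cap T}}|$ and $|\outset{y_{S \cap T}}|$ have the same parity, which is the definition of $x_{S \cap T}$ and $y_{S \cap T}$ being of the same parity. There is no real obstacle here; the argument is a bookkeeping exercise using the disjoint decomposition $[n] = S \sqcup \comp{S}$ and the fact that $\setvec{T}$ depends on $x$ only through the parity of $|\outset{x} \cap T|$.
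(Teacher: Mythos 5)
Your proposal is correct and follows essentially the same route as the paper: both decompose the relevant part of the outcome over $T = (S \cap T) \sqcup (T - S)$, cancel the shared contribution from $u_{\comp{S}}$ on $T - S$, and reduce the comparison to the parity of $x_{S \cap T}$ versus $y_{S \cap T}$. Your write-up just spells out the bookkeeping more explicitly than the paper's two-sentence version.
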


\begin{proof}
The values
 of the entries $\setvec{T}(x)$ and $\setvec{T}(y)$  depend solely on the respective parity of the partial outcomes $x_{S \cap T}u_{T - S}$ and $y_{S \cap T}u_{T-S}$. These parities agree if and only if the parities of $x_{S\cap T}$ and $y_{S \cap T}$  agree. 
\qed \end{proof}

\begin{corollary}
\label{disjointed}
If $T \subset \comp{S}$ 
and 
the  outcomes $x = x_S u_{\comp{S}}$ and $y= y_S u_{\comp{S}}$  agree on $\comp{S}$, 
then $\setvec{T}(x)=\setvec{T}(y)$. \qed
\end{corollary}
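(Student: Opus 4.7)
The plan is to deduce this directly from Lemma \ref{overlap}. Since the hypothesis $T \subset \comp{S}$ gives $S \cap T = \emptyset$, the partial outcomes $x_{S \cap T}$ and $y_{S \cap T}$ are both the empty (length-zero) string, which is vacuously even. Hence they have the same parity, and Lemma \ref{overlap} immediately yields $\setvec{T}(x) = \setvec{T}(y)$.

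Alternatively, one can argue directly from the definition \eqref{eqn:voterentry}: because $T \subset \comp{S}$, the intersection $\outset{x} \cap T$ depends only on $u_{\comp{S}}$ (specifically, on $u_T$) and not on $x_S$, so $\outset{x} \cap T = \outset{u_{\comp{S}}} \cap T = \outset{y} \cap T$. Taking parities via $\even{\cdot}$ gives the claim. Either route takes only one or two lines.

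Since the corollary is a trivial specialization of the preceding lemma, there is no real obstacle; the only thing to be careful about is justifying that the empty partial outcome is even, which is immediate from $\even{\emptyset} = 1$ (the empty set has even cardinality $0$).
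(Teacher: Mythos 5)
Your proof is correct and matches the paper's intent: the corollary is stated with an immediate \(\qed\) precisely because it is the special case \(S\cap T=\emptyset\) of Lemma \ref{overlap}, where the empty partial outcome is vacuously even. Your derivation (and the alternative direct check from the definition) is exactly the argument the paper leaves implicit.
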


\begin{corollary}
\label{supersets}
If $S \subset T$ 
and 
the  outcomes $x = x_S u_{\comp{S}}$ and $y= y_S u_{\comp{S}}$  agree on $\comp{S}$, 
then $\setvec{T}(x)=\setvec{T}(y)$ if and only if $x_S$ and $y_S$ have the same parity. \qed
\end{corollary}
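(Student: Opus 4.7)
The plan is to derive this corollary as a one-line specialization of Lemma \ref{overlap}. Since we assume $S \subset T$, the intersection simplifies: $S \cap T = S$. Hence the partial outcomes $x_{S \cap T}$ and $y_{S \cap T}$ appearing in Lemma \ref{overlap} are exactly $x_S$ and $y_S$ themselves.

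Concretely, I would invoke Lemma \ref{overlap} directly on the pair $x = x_S u_{\comp{S}}$ and $y = y_S u_{\comp{S}}$, which already satisfies the hypothesis of agreeing on $\comp{S}$. The lemma's conclusion then reads: $\setvec{T}(x) = \setvec{T}(y)$ if and only if $x_{S \cap T}$ and $y_{S \cap T}$ have the same parity. Substituting $S \cap T = S$ yields precisely the stated biconditional for $x_S$ and $y_S$.

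There is no real obstacle here; the content of the corollary is that, when $T$ contains $S$, the bits of $T \setminus S$ are shared by $x$ and $y$ (they lie in $\comp{S}$ where the outcomes agree) and so contribute equally to the parity count $|\widehat{x} \cap T|$ versus $|\widehat{y} \cap T|$. Thus the parities in $T$ differ precisely when the parities on $S$ differ. The whole proof is essentially the single sentence ``Apply Lemma \ref{overlap}, noting $S \cap T = S$,'' which matches the terse \qed-only style of the immediately preceding Corollary \ref{disjointed}.
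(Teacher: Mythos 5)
Your proof is correct and matches the paper's approach: the corollary is stated with a bare \qed precisely because it is the specialization of Lemma \ref{overlap} to the case $S \cap T = S$, which is exactly the one-line argument you give.
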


Lemma \ref{overlap} highlights that fact that when outcomes agree on some subset $U$, then the voter basis vectors indexed by subsets of $U$ do  not contribute to preference differences between the two outcomes. 
The two corollaries are analogous to observations we made in the proof of Theorem \ref{thm:voter-basis}(b): the voter basis vector $\setvec{T}$ is trivially separable on sets disjoint from $T$ and separable on supersets of $T$.

\subsection{A Nonseparable Vector}

\label{sec:nonsep}

In this subsection, we introduce a rank function on $\powerset{[n]}$ and then use it to construct a preference vector $\w$ that induces a total order that is completely nonseparable.
 The proof is elementary,  but is valuable in that it  provides an opportunity to evaluate the separabilty of a preference vector, prior to grappling with the more intricate arguments below. Moreover, vectors derived from $\w$  will be used in the proof of Theorem \ref{thm:tree-char}.

\begin{definition} 
\label{def:rank}
The rank function $\rank: \powerset{[n]} \rightarrow [2^n]$ maps a subset $A \subset [n]$ to its position in the ordering of subsets of $[n]$  that lists the sets by increasing set size, and then within a fixed size, lists the sets lexicographically. 
\end{definition}

For example, when $n=3$, our  ordering is
\begin{equation}
\label{eqn:rank}
  \emptyset \prec \{1\} \prec \{2 \} \prec \{ 3 \} \prec \{1,2\} \prec \{1,3\} \prec \{2,3\} \prec \{1,2,3\},
\end{equation}
so that $\rho(\emptyset)=1$, and $\rho(\{1,3\}) = 6$, and so on.
Note that, in general, if  $A \subsetneq B \subset [n]$ then $\rank(A) < \rank(B)$. 
In other words,  the rank function $\rho$ provides a total ordering on $\powerset{[n]}$ that is monotone with respect to set containment.

 \begin{lemma}
 \label{thm:comp-nonsep}
The preference ordering induced by the vector
\begin{equation*}
%\label{eqn:w}
\w = \sum_{ A \in \powerset{[n]}} 2^{\rank(A)} \setvec{A}
\end{equation*}
 is completely nonseparable. In other words,
$\character(\w) = \character(\succeq_{\w})= \{ \emptyset, [n] \}$.
 \end{lemma}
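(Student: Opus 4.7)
The plan is to exploit the geometric growth of the coefficients so that the top term $2^{\rank([n])}\setvec{[n]}$ alone dictates the preference order. Since $\rank$ is a bijection $\powerset{[n]} \to [2^n]$ that is monotone with respect to inclusion, we have $\rank([n]) = 2^n$, so the coefficient of $\setvec{[n]}$ in $\w$ is $2^{2^n}$. Each other coefficient $2^{\rank(A)}$ lies in $\{2^1,\ldots,2^{2^n-1}\}$, and each entry $\setvec{A}(x) \in \{0,1\}$. Hence for any two outcomes $x,y$ we have
\[
\bigl|(\w(x)-\w(y)) - 2^{2^n}(\setvec{[n]}(x)-\setvec{[n]}(y))\bigr|
\;\leq\; \sum_{k=1}^{2^n-1} 2^k = 2^{2^n}-2 < 2^{2^n}.
\]
Therefore whenever $\setvec{[n]}(x) \neq \setvec{[n]}(y)$, the sign of $\w(x)-\w(y)$ matches the sign of $\setvec{[n]}(x)-\setvec{[n]}(y)$, regardless of the contribution of the other basis vectors.

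With this dominance in hand, I would fix an arbitrary proper nonempty $S \subsetneq [n]$ and reuse the Case 3 witnesses from the proof of Theorem~\ref{thm:voter-basis}(b) applied to $A = [n]$. Namely, choose $s \in S$ and $a \in \comp{S}$ (both exist because $S$ is proper and nonempty), and set $x_S = 0_*$, $y_S = 1_s 0_*$, $u_{\comp{S}} = 0_*$, $v_{\comp{S}} = 1_a 0_*$. A direct parity computation gives
\[
\setvec{[n]}(x_S u_{\comp{S}}) = 1,\quad \setvec{[n]}(y_S u_{\comp{S}}) = 0,\quad \setvec{[n]}(x_S v_{\comp{S}}) = 0,\quad \setvec{[n]}(y_S v_{\comp{S}}) = 1,
\]
so by the dominance inequality, $\w(x_S u_{\comp{S}}) > \w(y_S u_{\comp{S}})$ while $\w(x_S v_{\comp{S}}) < \w(y_S v_{\comp{S}})$. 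This shows that $S$ is nonseparable with respect to $\succeq_{\w}$.

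Since $\emptyset$ and $[n]$ are vacuously separable for every preference order, this gives $\character(\w) = \{\emptyset, [n]\}$, establishing the lemma. The only substantive point is the coefficient bound in the first paragraph; once that is in place, the nonseparability witnesses are inherited directly from Theorem~\ref{thm:voter-basis}(b), so there is no significant obstacle. Moreover, the same argument shows that $\succeq_\w$ is a strict total order: if $x \neq y$ then some voter basis vector $\setvec{A}$ distinguishes them, and iterating the dominance argument on successive highest-rank distinguishing vectors yields $\w(x) \neq \w(y)$, which incidentally justifies using $\w$ as a strong building block in later constructions.
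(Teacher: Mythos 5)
Your proposal is correct, and it verifies the key sign changes by a different mechanism than the paper. You use the same witnesses the paper does (the all-zero partial outcome versus a singleton $1_s0_*$ on $S$, paired with $0_*$ versus $1_a0_*$ on $\comp{S}$), but you establish the two opposite strict preferences via a single dominance inequality: since $\rank$ is a monotone bijection onto $[2^n]$, the top coefficient $2^{\rank([n])}=2^{2^n}$ exceeds the sum $2^{2^n}-2$ of all other coefficients, so the sign of $\w(x)-\w(y)$ is dictated by $\setvec{[n]}(x)-\setvec{[n]}(y)$ whenever the latter is nonzero; the parity table for $\setvec{[n]}$ then hands you nonseparability of every proper nonempty $S$ directly from Case 3 of Theorem~\ref{thm:voter-basis}(b). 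The paper instead computes the two differences exactly, writing $\w(0_a0_*)-\w(1_a0_*)$ as $\sum_{S\ni a}2^{\rank(S)}>0$ and pairing terms $2^{\rank(S\cup\{a\})}-2^{\rank(S\cup\{a,b\})}<0$ for the second comparison, which exploits the full monotonicity of $\rank$ under inclusion rather than just the maximality of $\rank([n])$. Your route is shorter and more uniform; the paper's exact pairing foreshadows the term-by-term bookkeeping used in the tree-character lemmas. Your closing remark that $\succeq_\w$ is a strict total order is also sound (it is Lemma~\ref{lem:all-link-distinct}, which the paper proves via uniqueness of binary representations --- essentially your iterated-dominance argument), though it is not needed for the statement at hand.
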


 \begin{proof} Let $A$ be a nontrival proper subset of $[n]$, and let $a \in A$ and $b \in [n] - A$.
We have
 $$
 \w(0_a0_*)  - \w(1_a0_*)  =
\sum_{S: a \in S \subseteq [n] }  2^{\rho(S)}  > 0.
 $$
 Meanwhile 
 $$
\w(0_a1_b0_*) - \w(1_a1_b 0_*0) = \sum_{S \in [n]-a-b} \left( 2^{\rho(S \cup \{ a \})} - 2^{\rho(S \cup \{ a,b \})} \right) < 0
 $$
 because the ranking function $\rho(S \cup \{ a \}) < \rho(S \cup \{ a,b\})$ for all $S \in  [n]-a-b$.
Our preference between the  outcomes $z_Au_{[n]-A}$ and $1_sz_{A-a}u_{[n]-A}$ depends on the value of $u_{[n]-A}$. Therefore the set $A$ is not separable.
 \qed \end{proof}
 
We now investigate the properties of $\w$, and of linear combinations of its components, making some observations that will be useful in the proof of Theorem \ref{thm:tree-char}.
First, we note that the entries of $\w$ are pairwise distinct.
 
 \begin{lemma}
 \label{lem:all-link-distinct}
 If $x \neq y$ are distinct outcomes on $[n]$, then $\w(x) \neq \w(y)$, so
$\w$ induces a total order on $\powerset{[n]}$.
 \end{lemma}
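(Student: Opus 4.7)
My plan is to exploit two facts: (i) the numbers $\rho(A)$ are pairwise distinct elements of $[2^n]$, so $\w(x) = \sum_{A} 2^{\rho(A)} \v_A(x)$ is literally the binary expansion of $\w(x)$ with the bit in position $\rho(A)$ equal to $\v_A(x) \in \{0,1\}$; and (ii) the entries $\v_A(x)$ separate outcomes, in the sense that for $x \neq y$ there is some $A \subseteq [n]$ with $\v_A(x) \neq \v_A(y)$.

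Given (i), if we define the \emph{bit pattern} $T_x = \{A \subseteq [n] : \v_A(x) = 1\}$, then $\w(x) = \sum_{A \in T_x} 2^{\rho(A)}$ uniquely determines the set $T_x$, since distinct subsets of $[2^n]$ produce distinct sums of distinct powers of two. Therefore it suffices to show that $T_x \neq T_y$ whenever $x \neq y$.

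For (ii), the cleanest approach is to pick a single coordinate where $x$ and $y$ disagree and use the corresponding singleton basis vector. Specifically, if $x \neq y$ then $x_i \neq y_i$ for some $i \in [n]$, and for $A = \{i\}$ the parity $|\outset{x} \cap A|$ equals $x_i$, so $\v_{\{i\}}(x) = \even{\{i \in \outset{x}\}} = 1 - x_i$. Hence $\v_{\{i\}}(x) \neq \v_{\{i\}}(y)$, which gives $\{i\} \in T_x \triangle T_y$ and therefore $T_x \neq T_y$.

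Combining (i) and (ii) yields $\w(x) \neq \w(y)$ for all $x \neq y$, so $\w$ takes $2^n$ distinct values on $\bitstrings{[n]}$ and thereby induces a total order on $\powerset{[n]}$. The only step requiring any care is verifying that the singleton vectors distinguish differing coordinates, and this is immediate from the definition of $\v_A$; the rest is just the observation that powers of two form a base-$2$ positional number system.
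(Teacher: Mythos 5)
Your proof is correct and follows essentially the same route as the paper: both arguments locate a coordinate $i$ where $x$ and $y$ disagree, observe that the singleton summand $2^{\rho(\{i\})}$ appears in exactly one of $\w(x)$, $\w(y)$, and conclude by uniqueness of binary representation. Your packaging via the bit-pattern set $T_x$ is a slightly more formal way of saying the same thing, and your explicit verification that $\v_{\{i\}}(x)=1-x_i$ is a nice touch the paper leaves implicit.
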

 
 \begin{proof}
 Let $a \in [n]$ be an element where $x$ and $y$ disagree. Without loss of generality,
 $x = 1_a x_{[n]-a}$ and $y=0_a y_{[n]-a}$.
The entry $\w(x)$ includes the summand $2^{\rank(\{ a\})}$ while $\w(y)$ does not. Since every positive integer has a unique binary representation, $\w(x) \neq \w(y)$.
 \qed \end{proof}
 
We close this subsection with one final observation. The sum of the coefficients of $\w$ is 
$$
 \sum_{A \in \powerset{[n]}} 2^{\rank(A)} = \sum_{k=1}^{2^n} 2^k = 2^{2^n+1} -2 < 2^{2^n+1}.
$$
Later on, in the proof of Theorem \ref{thm:tree-char},  we will break unwanted symmetries by adding a small vector whose nonzero coefficients are drawn from
$2^{-2^n-1} \w  $. We choose this scaling of $\w$ because its coefficient sum is strictly less than 1, which will be smaller than all the other nonzero coefficients.  Crucially, the impact of this small vector will be inconsequential, except when comparing entries that are otherwise equal.   
With that in mind, for every $A \in \powerset{[n]}$, we introduce the constant
\begin{equation}
\label{eq:coeflink}
\coeflink_A= 2^{\rank(A)-2^{2^n}-1} < 2^{-2^n}.
\end{equation}
We can now   generalize the previous lemma.

\begin{lemma}
 \label{lem:link-distinct}
Let $\emptyset \neq \calS \subset \powerset{[n]}$ be a family of subsets of $[n]$, and let $x \neq y$ be distinct outcomes on $[n]$ such that there is at least one $S \in \calS$ where $x_S$ and $y_S$ have different parities.
Let 
$$
\u =  \sum_{ A \in \calS } d_A \setvec{A}.
$$
Then $\u(x) \neq \u(y)$ and   $-1 < \u(x) - \u(y) < 1$.
\end{lemma}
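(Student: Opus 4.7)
The plan is to expand
\[
\u(x) - \u(y) \,=\, \sum_{A \in \calS} \coeflink_A \bigl(\setvec{A}(x) - \setvec{A}(y)\bigr)
\]
and treat the right-hand side as a signed combination of distinct dyadic weights. Because $\setvec{A}(x) = \even{\outset{x} \cap A} \in \{0,1\}$, each term $\setvec{A}(x) - \setvec{A}(y)$ lies in $\{-1, 0, +1\}$; by Lemma~\ref{overlap} (applied with $S = [n]$) it is nonzero precisely when $x_A$ and $y_A$ have opposite parity. Let $\calT \subseteq \calS$ be the subfamily of $A$ for which this happens, and let $\epsilon_A \in \{-1,+1\}$ record the sign of $\setvec{A}(x) - \setvec{A}(y)$ on $\calT$. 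The hypothesis guarantees at least one $S \in \calS$ with $x_S, y_S$ of different parity, so $\calT$ is nonempty, and
\[
\u(x) - \u(y) \,=\, \sum_{A \in \calT} \epsilon_A \coeflink_A.
\]

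To show this sum is nonzero, I would re-use the idea from Lemma~\ref{lem:all-link-distinct}: since $\rank$ is a bijection from $\powerset{[n]}$ onto $[2^n]$, the weights $\coeflink_A$ are \emph{distinct} positive powers of two. A signed combination of distinct positive powers of two cannot cancel, because the largest term dominates: letting $A^\star \in \calT$ be of maximal rank, a geometric-series estimate gives $\sum_{A \in \calT \setminus \{A^\star\}} \coeflink_A < \coeflink_{A^\star}$, so the term $\epsilon_{A^\star}\coeflink_{A^\star}$ strictly outweighs the rest in absolute value and the total is nonzero.

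For the two-sided bound I would simply estimate
\[
|\u(x) - \u(y)| \,\le\, \sum_{A \in \calT} \coeflink_A \,\le\, \sum_{A \in \powerset{[n]}} \coeflink_A \,<\, 1,
\]
where the final strict inequality is exactly the coefficient-sum calculation performed in the paragraph preceding equation~\eqref{eq:coeflink} (the scaling of $\w$ was chosen precisely so that its total coefficient mass is less than $1$). The only real substance is the ``distinct powers of two'' observation used for nonvanishing; once that is in hand, the bound $-1 < \u(x) - \u(y) < 1$ falls out immediately from the built-in scaling of $\coeflink_A$, and no case analysis on the structure of $\calS$ is required.
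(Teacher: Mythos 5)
Your proposal is correct and follows essentially the same route as the paper: both arguments reduce to the fact that the $\coeflink_A$ are distinct (scaled) powers of two, so the signed combination over the sets where the parities of $x$ and $y$ disagree cannot vanish, and both obtain the bound $-1 < \u(x)-\u(y) < 1$ from the total coefficient sum $\sum_{A \in \powerset{[n]}} \coeflink_A < 1$. The paper phrases the non-vanishing step via uniqueness of binary representations (citing Lemma~\ref{lem:all-link-distinct}) rather than your dominant-term estimate, but these are the same observation.
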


 \begin{proof}
 Without loss of generality,
 $x_S$ is odd and $y_S$ is even.
The entry $\u(x)$ includes the summand $d_S$ while $\u(y)$ does not. As in the proof of
Lemma \ref{lem:all-link-distinct},  $\u(x) \neq \u(y)$, since the summands are scaled powers of 2.
We have $-1 < \u(x) - \u(y) < 1$ since $\sum_{A \in \powerset{[n]}} d_A < 1$.
 \qed \end{proof}

%\section{Distributive Characters}
%\label{sec:distributive}

\section{Tree Characters}
\label{sec:trees}

We now turn to the proof of Theorem \ref{thm:tree-char}, In Section \ref{sec:tree-thm}, we develop some terminology and then give an expanded version of Theorem \ref{thm:tree-char}.  
This detailed formulation specifies the linear combination of $\VV_n$  that produces a preference order with the desired tree character.  We give then two examples of preference vectors constructed using the tree character theorem.   In Section \ref{sec:tree-proof}, we  prove  Theorem \ref{thm:tree-char}, deferring  the details of four technical lemmas to Section \ref{sec:tree-lemma}.

\subsection{The Tree Character Theorem}
\label{sec:tree-thm}

We start by building on Definition \ref{def:tree-char}, where we introduced the tree character. 
We then formulate a more explicit version of Theorem \ref{thm:tree-char} which gives a precise  formula for the preference vector that induces the desired tree character.

\begin{definition}
\label{def:tree-char2}
Let $\ch$ be a tree character and let $A , B \in \ch$. If $B$ covers $A$, then $B$ is the \emph{parent} of $A$, and $A$ is the \emph{child} of $B$. The children of $B$ are called \emph{siblings}. The \emph{$k$th generation} of sets consists of all sets that are at distance $k$ from the root $[n]$. For $A \neq \emptyset$, we use $g(A)$ to denote the generation of $A$. \emph{Ancestors} and \emph{descendants} 
%of set $A$ 
are defined in the natural way.
\end{definition}

In a tree character, every proper nonempty set has a unique set that covers it. For example, the collection of sets
$$
\ch_1 = \{\emptyset,\{1\}, \{2\},\{3\},\{1,2\}, \{4,5\},\{3,4,5\},\{1,2,3,4,5,6,7,8\}\}
$$
is a tree character. Figure \ref{fig:tree-char-example}(a) represents $\ch - \{ \emptyset \}$ as a rooted tree.
We have $g(\{1,2\})=1$ and $g(\{4,5\})=2$ and $g(\{4\})=3$.
On the other hand, the character
$$
\ch_2 = \{ \emptyset, \{1\}, \{1,2\}, \{1,3\}, \{1,2,3\} \}
$$
is not a tree character, since both $\{1,2\}$ and $\{1,3\}$ cover $\{1\}$.

\begin{figure}[ht]
\begin{center}

\begin{tabular}{ccc}

\begin{tikzpicture}[every node/.style={draw,rectangle}]
\tikzset{sibling distance=20pt}
\tikzset{edge from parent/.style=
{draw,
edge from parent path={(\tikzparentnode.south)
-- +(0,-8pt)
-| (\tikzchildnode)}}}

\Tree [.{1 2 3 4 5 6 7 8 } 
        [.{1 2} [.1 ] [.2 ] ]
        [.{3 4 5} [.{3} ]
               [.{4 5} [.4 ]  ] ]
    ]

\end{tikzpicture}

&
\qquad \qquad
&

\begin{tikzpicture}[every node/.style={draw,rectangle}]
\tikzset{sibling distance=20pt}
\tikzset{edge from parent/.style=
{draw,
edge from parent path={(\tikzparentnode.south)
-- +(0,-8pt)
-| (\tikzchildnode)}}}

\begin{scope}[shift={(8,0)}]

\Tree [.{1 2 3 4 5 6 7 8} 
        [.{1 2} [.1 ] [.2 ] ]
        [.{3 4 5} [.{3} ]
               [.{4 5} [.4 ] [.\node[dashed, fill=gray!20]{5};  ] ] ]
        [.\node[dashed, fill=gray!20]{6 7 8};  ]               
    ]
        
\end{scope}

\end{tikzpicture}

\\
\\

(a) && (b)

\end{tabular}

\end{center}
\caption{(a) The Hasse diagram (omitting the set $\emptyset$) of a tree character $\ch_1$
% = \{\emptyset,\{1\}, \{2\},\{3\},\{1,2\}, \{4,5\},\{3,4,5\},\{1,2,3,4,5,6,7,8\}\}$. 
(b) The haunted Hasse diagram of $\ch_1$. The   ghost children are shown with shaded background and dashed outlines.}
\label{fig:tree-char-example}
\end{figure}
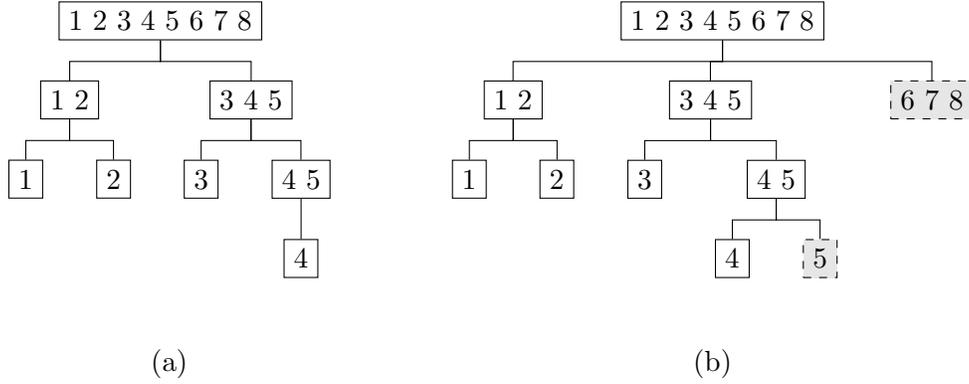

As we construct our preference vector for a tree character $\ch$, we will also need to keep track of the elements of $[n]$ that appear in generation $k$, but do not appear in generation $k+1$.  For example, in $\ch_1$, the set $\{ 4, 5\}$ has one child $\{ 4\}$ but the element 5 is ``missing'' from the next generation. For convenience, we collect these missing elements into sets of \emph{ghost children}.
\begin{definition} Let $\ch$ be a tree character and consider
 $A \in \ch$ with children $A_{1}, A_{2}, \ldots, A_{k}$ where $k \geq 1$. If $\cup_i A_{i} \neq A$, then the \emph{ghost child} of $A$ is
$A - \cup_i A_{i} $. 
%In other words, the ghost child of $A$ is the relative complement of the union of its children. 
The Hasse diagram that includes the ghost children is called a \emph{haunted Hasse diagram}.
\end{definition}
Note that if $A$ does not have any children, then it does not have a ghost child either. Figure \ref{fig:tree-char-example}(b) shows the haunted Hasse diagram for the tree character $\ch_1$.
With the addition of ghost children,  every element in the set $[n]$ appears in exactly one leaf of the  haunted Hasse diagram.

During our construction, we will use  ghost children to prevent (unwanted) unions of siblings from becoming  separable. For example, in Figure \ref{fig:tree-char-example}(b), the children of the set $[8]$ are $\{1,2\}$ and $\{3,4,5\}$. We will use the ghost child $\{6,7,8\}$ to prevent  the set $\{1,2,3,4,5\}$ from also being separable. More precisely, to break unwanted separability on unions of siblings $ A_i \cup A_j$, we include a tiny vector in the direction of $\v_{A_i \cup A_j}$, as described in Theorem \ref{thm:tree-char} below.

\begin{definition} 
\label{def:siblink}
Let $\ch$ be a tree character.
Let $A \in \ch$ with children $A_1,A_2,\ldots,A_k$, where one of these sets might be a ghost child of $A$. Then the \emph{sibling linkage} $\linkage(A)$ of these children is
$$\linkage(A) = \{ A_i \cup A_j : 1 \leq i < j \leq k \}$$
Elements of the sibling linkage $\linkage(A)$ are called \emph{siblinks}. The set
$$
\linkage = \bigcup_{A \in \ch} \linkage(A)
$$
is the \emph{sibling linkage} of the tree character $\ch$.
\end{definition}

We can now state a more detailed version of Theorem \ref{thm:tree-char}

 \begin{nonum}[Theorem~\ref{thm:tree-char}]
 Every tree character is admissible. More precisely,
consider a tree character
$\ch$ on $[n]$. Let $\base=2 - 2^{-(n-1)}$. For $\emptyset \neq A \in \ch$, let  $\coef_A=\base^{g(A)}$ and let
 $\coeflink_B = 2^{\rank(B)-2^{2^n}-1}$ where $\rank(B)$ is the rank of set $B$.
Define
\begin{equation}
\label{eqn:vchi}
\setvec{\ch}= \sum_{A \in \ch} \coef_A \setvec{A} + \sum_{B \in \linkage} \coeflink_B \setvec{B}.
\end{equation}
Then $\ch$ is the collection of separable sets in the ordering induced by the preference vector $\setvec{\ch}$. In other words, $\character(\setvec{\ch})=\ch$.
\end{nonum}

We illuminate the form and function of the coefficients  $c_A$ and $d_B$ in Section \ref{sec:tree-proof} below. For now, it is enough to mention that $c_A$ creates the separability of $A \in \ch$, and $d_B$ breaks unwanted separabilities  of some sets outside of $\ch$. Finally, we note that $\base$ is essentially equal to 2, but choosing $\base=2$ would invalidate Lemma \ref{lem:notsiblings} below.

We conclude this section with two examples which use Theorem \ref{thm:tree-char} to construct  preference orderings with tree characters. First, consider the simple example for $n=3$ with tree character
$$
 \ch_3=\{ \emptyset, \{1\}, \{2\}, \{1,2,3\} \}
$$
The root $\{1,2,3\}$ has children $\{1\}, \{2\}$ and ghost child $\{3\}$. Therefore, the siblinks are
$$
\linkage_3 = \{ \{1,2\}, \{1,3\}, \{2,3\} \}.
$$
We have $\alpha = 2 - 2^2 =7/4$ and the siblink coefficients are of the form $2^{\rho(B) - 9}$
for the rank function $\rho$ corresponding to  equation \eqref{eqn:rank}. Our desired vector is
$$
\setvec{\ch_3}
=
\setvec{\{1,2,3\}} + \frac{7}{4} \setvec{\{1\}} + \frac{7}{4} \setvec{\{2\}}   + \frac{1}{16} \setvec{\{1,2\}}  + \frac{1}{8} \setvec{\{1\}} + \frac{1}{4} \setvec{\{2,3\}}.
$$
Using the voter basis vectors listed in Table \ref{table:P3}, the entries of $16 \, \setvec{\ch_3}$ are 
$$
 \begin{array}{c|c|c|c|c|c|c|c}
 111 & 110 & 101 & 100 & 011 & 010 & 001 & 000 \\
 \hline
7 & 17 & 46 & 32 & 48 & 30 & 57 & 79 
\end{array} 
$$
which corresponds to preference order
$$
111 \prec 110  \prec 010 \prec 100 \prec 101 \prec 011 \prec 001 \prec 000.
$$
It is easy to check that the separable sets are precisely those in $\ch_3$.

Finally, we construct a preference vector in $\PP^9 \cong \QQ^{2^9}$ for the 
 tree character 
 \begin{align}
  \nonumber
 \ch_4&=\{
 \emptyset,
 \{1\},  \{2\} ,\{7\},\{8\},
 \{3,4\},  \{5,6\}, \{7,8,9\}, \\
&\qquad  \{1,2,3,4\},  \{5,6,7,8,9\},
  \{1,2,3,4,5,6,7,8,9\}\}.
  \label{eqn:ch4}
 \end{align}
 The haunted Hasse diagram of $\ch_4$ is shown in Figure \ref{fig:char-to-vec-example}.

\begin{figure}[ht]
\begin{center}
\begin{tikzpicture}[every node/.style={draw,rectangle}]
\tikzset{sibling distance=20pt}
\tikzset{edge from parent/.style=
{draw,
edge from parent path={(\tikzparentnode.south)
-- +(0,-8pt)
-| (\tikzchildnode)}}}

\Tree [.{1 2 3 4 5 6 7 8 9} 
            [.{1 2 3 4} 
                [.1 ]
                [.2 ]  
                [.\node[dashed, fill=gray!20]{3 4};]]
            [.{5 6 7 8 9} 
                [.{5 6} ] [.{7 8 9} 
                    [.7 ] [.8 ] [.\node[dashed, fill=gray!20]{9};] ] ]
    ]

\end{tikzpicture}

\end{center}
\caption{The haunted Hasse diagram for the tree character $\ch_4$.}
\label{fig:char-to-vec-example}
\end{figure}
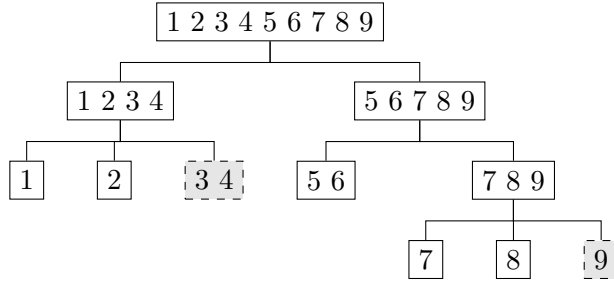

There are four nonempty sibling linkages for $\ch_4$:
\begin{align*}
    \linkage([9])&= \{ \{1,2,3,4,5,6,7,8,9\} \}, \\
    \linkage(\{1,2,3,4\})&=\{ \{1,2\}, \{1,3,4\}, \{2,3,4\} \}, \\
    \linkage(\{5,6,7,8,9\})&= \{ \{5,6,7,8,9\} \}, \\
    \linkage(\{7,8,9\})&= \{\{7,8\},\{7,9\},\{8,9\}\}.
\end{align*}
So the set of all siblinks is 
\begin{align*}
\linkage_4 =& \{
\{1,2\}, \{7,8\},  \{7,9\}, \{8,9\},
\{1,3,4\},  \{2,3,4\} ,   \\
& \qquad \{5,6,7,8,9\} , \{1,2,3,4,5,6,7,8,9\} 
\}.
\end{align*}

As described in equation \eqref{eqn:vchi}, the preference vector $\setvec{\ch_4}$ consists of two summations. First, 
 we create a linear combination of basis vectors indexed by elements of $\ch_4$. The coefficient of $\setvec{A}$ is determined by the generation of $A$ in the haunted Hasse diagram. This gives us the first part of our vector $\setvec{\ch_4}$:
\begin{align}
\nonumber
& 1\setvec{[9]}+\base \setvec{\{1,2,3,4\}}+ \base \setvec{\{5,6,7,8,9\}}+\base^2\setvec{\{1\}}+\base^2\setvec{\{2\}}+\base^2\setvec{\{5,6\}} \\
\label{eqn:vec1}
& \quad +\base^2\setvec{\{7,8,9\}} +\base^3\setvec{\{7\}}+ \base^3\setvec{\{8\}},
\end{align}
where $\base=2-2^{-8}$.
Next, we create a linear combination of basis vectors indexed by the siblinks 
\begin{align}
\label{eqn:vec2}
\frac{1}{2^{2^{9}+1}} \sum_{B \in \linkage_4} 2^{\rho(B)} \setvec{B}
\end{align}
where $\rho(B)$ is the rank of siblink $B \in \linkage_4$ in the ordering of $\powerset{[4]}$.
We obtain $\setvec{\ch_4}  \in \PP^{9}$ by adding expressions  \eqref{eqn:vec1} and \eqref{eqn:vec2}.
We can routinely check that $\character(\setvec{\ch_4})=\ch_4$, though this is best done via mathematical software.

\subsection{Proof of the Tree Character Theorem}
\label{sec:tree-proof}

In this subsection, we prove Theorem \ref{thm:tree-char} via four technical lemmas. The proofs of those lemmas  are deferred to the next subsection. 
%This theorem uses the sibling linkage $\linkage$ from Definition \ref{def:siblink}, the constants $d_B$ defined in equation \eqref{eq:coeflink}, and the constant $\base=2-2^{-(n-1)}$, whose value is explained by the discussion after Lemma \ref{lem:basesum} below.
We start with  a few observations about  the coefficients in Theorem \ref{thm:tree-char}. First, if $A \in \ch \cap \linkage$ then the coefficient of $\v_A$ is $\coef_A + \coeflink_A.$
Second, we have $\coef_A \geq 1$ for every $A \in \ch$, while $\sum_{A \in \linkage} {d_A} < 1$.
A third  property is described by the following lemma, which illuminates the choice of the constant $\base=2-2^{-(n-1)}$.
\begin{lemma}
\label{lem:basesum}
Suppose that $n \geq 3$. Let $\base= 2-2^{-(n-1)}$. For $1 \leq  m \leq n-1$, we have
$$
0 < \base^m - \sum_{i=0}^{m-1} \base^i = \base^m - \frac{\base^{m} - 1}{\base-1}< 1,
$$
and consequently, for  $1 \leq r < s \leq n-1$, we have
$$
 \sum_{i=r}^{s-1} \base^i  < \base^s < \base^r +  \sum_{i=r}^{s-1} \base^i.
$$

\end{lemma}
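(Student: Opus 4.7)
The plan is to prove the first displayed inequality by putting $\base^{m} - (\base^{m}-1)/(\base-1)$ in closed form (which reveals why the specific choice $\base = 2 - 2^{-(n-1)}$ is convenient), and then derive the second inequality by a simple factorization that reduces it to the first.

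For the first part, I would start from the geometric series identity
\[
\sum_{i=0}^{m-1} \base^{i} = \frac{\base^{m}-1}{\base-1},
\]
combine over a common denominator, and factor $\base^{m}$ to obtain
\[
\base^{m} - \frac{\base^{m}-1}{\base-1} = \frac{\base^{m}(\base-2) + 1}{\base-1}.
\]
Substituting $\base - 2 = -2^{-(n-1)}$ and $\base - 1 = 1 - 2^{-(n-1)}$, and multiplying numerator and denominator by $2^{n-1}$, this simplifies to
\[
\frac{2^{n-1} - \base^{m}}{2^{n-1} - 1}.
\]
For $n \geq 3$ the denominator is positive, so the bounds $0 < \cdot < 1$ reduce to $1 < \base^{m} < 2^{n-1}$. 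The left inequality follows from $\base > 1$ and $m \geq 1$, and the right from $\base < 2$ together with $m \leq n-1$, giving $\base^{m} < 2^{m} \leq 2^{n-1}$.

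For the consequence with $1 \leq r < s \leq n-1$, I would factor out $\base^{r}$:
\[
\base^{s} - \sum_{i=r}^{s-1} \base^{i} = \base^{r}\left( \base^{s-r} - \sum_{j=0}^{s-r-1} \base^{j} \right),
\]
and apply the first part with $m = s-r$. Since $r \geq 1$ and $s \leq n-1$, we have $1 \leq s-r \leq n-2 \leq n-1$, so the parenthesized expression lies in $(0,1)$. Hence the left-hand side lies in $(0, \base^{r})$, which rearranges to the desired chain $\sum_{i=r}^{s-1} \base^{i} < \base^{s} < \base^{r} + \sum_{i=r}^{s-1} \base^{i}$.

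The calculation is routine; the only subtle point is tracking where the hypothesis $n \geq 3$ enters — the first part technically holds already for $n \geq 2$, but the range constraint $1 \leq r < s \leq n-1$ in the second part forces $n \geq 3$, and also guarantees $s-r \in \{1,\ldots,n-2\}$ so that the first part is applicable.
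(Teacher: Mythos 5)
Your proposal is correct and follows essentially the same route as the paper: both arguments clear the denominator to reduce the first chain to the condition $1 < \base^m < 2^{n-1}$ (the paper writes this as $1 < \base^m < 1/(2-\base)$ after dividing by the negative quantity $\base - 2$), verified via $\base > 1$ and $\base^m < 2^m \leq 2^{n-1}$. Your second part simply spells out the factorization by $\base^r$ that the paper leaves implicit with ``follows directly from the first,'' and your remark about where $n \geq 3$ enters is accurate.
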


While we are tempted to take $\base=2$, this would replace the final inequality with  the equality $2^s = 2^r +  \sum_{i=r}^{s-1} 2^i$, which breaks the proof of Lemma \ref{lem:notsiblings} below. The choice of $\base=2-2^{-(n-1)}$ gives the behavior we need for tree characters of $[n]$.

\begin{proof}
The first inequality chain is equivalent to
$ -1 < \base^m (\base-2) < \base-2$. Dividing by the negative quantity $\base-2$ gives
$$
1 < \base^m < \frac{1}{2-\base},
$$
which clearly holds for $\base=2-2^{-(n-1)}$ and $1 \leq m \leq n -1$.
The second  inequality chain follows directly from the first.
\qed \end{proof}

The proof of Theorem \ref{thm:tree-char} is quite technical. We  defer the details to a series of four lemmas. Consider the preference ordering induced by $\setvec{\ch}$ of equation \eqref{eqn:vchi}. We must show that it is separable on sets in $\ch$ and nonseparable on all other sets.

\begin{lemma}
\label{lem:tree-charsep}
 The preference vector $v_{\ch}$ is separable on every element in $\ch$.
\end{lemma}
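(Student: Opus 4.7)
The plan is to fix $S \in \ch$ and partial outcomes $x_S, y_S \in \bitstrings{S}$, and to track
$$D(u_{\comp{S}}) := \setvec{\ch}(x_S u_{\comp{S}}) - \setvec{\ch}(y_S u_{\comp{S}})$$
as $u_{\comp{S}}$ varies over $\bitstrings{\comp{S}}$; separability on $S$ amounts to $\sgn D(u_{\comp{S}})$ being independent of $u_{\comp{S}}$. Expanding $\setvec{\ch}$ by linearity via \eqref{eqn:vchi} and classifying each summand $e_C(\setvec{C}(x_S u_{\comp{S}}) - \setvec{C}(y_S u_{\comp{S}}))$ by the intersection pattern of $C$ with $S$, Lemma~\ref{overlap} together with Corollaries~\ref{disjointed} and \ref{supersets} yield three types of contributions: Type~0 ($C \cap S = \emptyset$) contributes $0$; Type~1 ($C \subseteq S$) contributes the same value for every $u_{\comp{S}}$, equal to $\pm e_C$ if $x_C, y_C$ differ in parity and $0$ otherwise; and Type~2 ($C \supsetneq S$) contributes $0$ when $x_S, y_S$ share parity on $S$, and otherwise $\pm e_C$ with sign controlled by the parity of $u_{\comp{S}}$ on $C - S$.

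The tree-character structure rules out any $C$ exhibiting a "true" partial overlap beyond these three cases. For $C \in \ch$, Definition~\ref{def:tree-char} combined with $S \in \ch$ leaves only $C = S$, $C \subsetneq S$, $C \supsetneq S$, or $C \cap S = \emptyset$. For a siblink $C = A_i \cup A_j \in \linkage$, the disjointness of siblings in the tree (Definition~\ref{def:tree-char2}) forces $C \cap S$ to be empty (when $A$ is disjoint from $S$, or when $S$ sits inside a sibling of both $A_i$ and $A_j$), a subset of $C$ (when $A \subseteq S$), or equal to $S$ (when $S \subsetneq A$ and $S$ is contained in $A_i$ or $A_j$). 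Hence Type~2 consists exactly of the $C$ with $C \supsetneq S$, and every Type~2 contribution shares the same parity sign.

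Set $\eta := +1$ if $x_S$ is even and $y_S$ odd, $\eta := -1$ if $x_S$ odd and $y_S$ even, and $\eta := 0$ otherwise. In Case~A ($\eta = 0$), every Type~2 contribution vanishes, so $D(u_{\comp{S}})$ equals the Type~1 constant $\kappa_1$ and separability is immediate. In Case~B ($\eta = \pm 1$), write $D(u_{\comp{S}}) = \kappa_1 + R(u_{\comp{S}})$. The vector $\setvec{S}$ contributes $\eta\,\base^{g(S)}$ to $\kappa_1$; every Type~2 contributor is either the unique $\ch$-ancestor of $S$ at some generation $i \in \{0, 1, \ldots, g(S)-1\}$ or a siblink, so
$$|R(u_{\comp{S}})| \le \sum_{i=0}^{g(S)-1} \base^i + \sum_{B \in \linkage} \coeflink_B.$$
Lemma~\ref{lem:basesum} yields a positive gap $\base^{g(S)} - \sum_{i=0}^{g(S)-1} \base^i$, while the siblink sum is bounded by $2^{n-2^n}$, far smaller than this gap. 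A hierarchical accounting of the remaining Type~1 contributions (from descendants of $S$ in $\ch$ and siblinks $\subseteq S$) uses the geometric growth $\coef_C = \base^{g(C)}$ with $\base \approx 2$ to show that they preserve the sign of $\eta$ established by $\setvec{S}$ and keep $|\kappa_1| > |R(u_{\comp{S}})|$. Thus $\sgn D(u_{\comp{S}}) = \eta$ is constant.

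The main obstacle is the dominance step in Case~B: proving that no configuration of descendants in $\kappa_1$ can partially cancel $\setvec{S}$'s contribution $\eta\,\base^{g(S)}$ below the $u_{\comp{S}}$-varying swing of $R(u_{\comp{S}})$. The resolution rests on Lemma~\ref{lem:basesum}'s explicit gap at every generation, the geometric growth of $\coef_C = \base^{g(C)}$ that propagates sign consistency up the sub-tree rooted at $S$, and the deliberately negligible siblink coefficients $\coeflink_B$ (designed to break ties without disturbing these dominance inequalities).
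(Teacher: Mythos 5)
Your setup --- expanding $\setvec{\ch}$ by linearity, classifying each summand by whether $C\cap S=\emptyset$, $C\subseteq S$, or $C\supsetneq S$ via Lemma~\ref{overlap} and Corollaries~\ref{disjointed} and~\ref{supersets}, verifying that the tree structure (including for siblinks) admits no other intersection pattern, and disposing of the equal-parity case --- is sound and is essentially the skeleton of the paper's argument. The gap is in your Case~B, and it sits exactly at the crux of the lemma. First, the claim $\sgn D(u_{\comp{S}})=\eta$ is backwards: a descendant $C\subsetneq S$ in $\ch$ has $g(C)>g(S)$, so its coefficient $\coef_C=\base^{g(C)}$ is \emph{larger} than $\coef_S=\base^{g(S)}$. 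A single child $C$ of $S$ on which $x_S,y_S$ exhibit the opposite parity pattern contributes $-\eta\,\base^{g(S)+1}$ to $\kappa_1$, overwhelming the term $\eta\,\base^{g(S)}$ from $\setvec{S}$; the sign of $\kappa_1$ is governed by the deepest contributing descendants, not by $\setvec{S}$, so the descendants do not ``preserve the sign of $\eta$.'' This is precisely why the paper's trichotomy is on the sign of the descendant-sum difference $\coefsum{x_S}{\subsets{S}{\ch}}-\coefsum{y_S}{\subsets{S}{\ch}}$ rather than on $\eta$, with its Case~3 showing that when this difference vanishes the parities must agree and the whole comparison becomes $u$-independent.

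Second, and more seriously, the inequality $|\kappa_1|>|R(u_{\comp{S}})|$ is the one genuinely delicate step, and ``a hierarchical accounting'' does not discharge it. Since the ancestor signs can be chosen independently (the sets $A_i-S$ along the ancestral chain are nested and nonempty, cf.\ Lemma~\ref{nesting}), $R$ really does swing over essentially $\pm\sum_{i=0}^{g(S)-1}\base^i$, so you must show that \emph{every} nonzero achievable value of $\sum_{C\in\ch,\,C\subseteq S}\epsilon_C\,\base^{g(C)}$ with $\epsilon_S=\eta\neq 0$ exceeds that budget in absolute value. This is not automatic: a chain of descendants $S\supsetneq C_1\supsetneq\cdots\supsetneq C_m$ carrying the pattern $\epsilon_S=\epsilon_{C_1}=\cdots=\epsilon_{C_{m-1}}=-1$, $\epsilon_{C_m}=+1$ produces $\base^{g(S)}\bigl(\base^{m}-\sum_{i=0}^{m-1}\base^{i}\bigr)$, which by Lemma~\ref{lem:basesum} is positive but strictly less than $\base^{g(S)}$ --- so the near-cancellation you need to exclude actually occurs, and the comparison with $\sum_{i=0}^{g(S)-1}\base^i$ succeeds only by a margin that shrinks with $n$ and must be checked against the siblink total. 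The paper isolates exactly this quantitative claim as Lemma~\ref{childdiff} together with inequality~\eqref{gparents}; without proving a statement of that kind, your Case~B does not close.
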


We prove this lemma in the next subsection.
Turning to sets that are not members of $\ch$, we introduce some additional definitions to partition these sets into three categories.

 \begin{definition}
 \label{def:unbreakable}
 Let $\ch$ be a tree character.
 A nonempty set  $B \in  \ch$ is {\bf unbreakable} when $B$ cannot be written as the union
 of sets  in $\ch - \{ B \}.$ The set of all unbreakable sets is denoted $U(\ch)$.
 \end{definition}
 
 The collection of unbreakable sets is a superset of the  join-irreducibles of the lattice $\ch$ (which are elements that cover exactly one other element). For example, in the tree character
 $\ch_4$ from equation \eqref{eqn:ch4} and Figure \ref{fig:char-to-vec-example}, the join-irreducibles are
 $$
 \{1\}, \{2\}, \{5,6\},  \{7\}, \{8\}
 $$
while $U(\ch_4)$ also includes the sets
$$
\{1,2,3,4\}, \{7,8,9\}.
$$

  \begin{definition}
   \label{def:construct}
 Let $\ch$ be a tree character and let $B \subset [n]$.
 Let  $$\Upsilon'_{\ch}(B) = \{A \in U(\ch) \mid A \subset B\}$$ 
 be the collection of $\ch$-unbreakable sets contained in $B$
 and let
 $$
 \Upsilon_{\ch}(B) = \{ A \in \Upsilon'_{\ch}(B) \mid \forall A' \in \Upsilon'_{\ch}(B), A \not\subset A' \}.
 $$ 
 be the collection of  maximal sets in  $\Upsilon'_{\ch}(B)$. 
The {\bf construct} of $B$ is
 $$
 \construct{B}{\ch} = \bigcup_{A \in \Upsilon_{\ch}(B)} A.
 $$
When $\construct{B}{\ch} = B$, we say that  $B$ is {\bf constructible}.  
\end{definition}

We make some elementary observations about constructs and unbreakable sets. First, if $A$ is unbreakable, then $A - \construct{A}{\ch} \neq \emptyset$. Second, for any $B \subset [n]$,
the tree structure of $\ch$ ensures that the collection $\Upsilon_{\ch}(B)$ of maximal $\ch$-unbreakable subsets of $B$ is pairwise disjoint. 
Third,  every set $A \in \ch$ is $\ch$-constructable: either $A$ is unbreakable, or it can be written as the disjoint union of  unbreakable sets. And finally, if $B$ can be written as the union of sets in the tree character $\ch$, then $B$ is constructible using maximal $\ch$-unbreakable subsets of $B$.

For an example, let us return to tree character $\ch_4$. 
The following four sets are not $\ch_4$-constructible:
$$
\{5\},\
\{1,2,4\},\
\{2,6\},\
\{1,2,7,9\},
$$
while these five sets are $\ch_4$-constructible:
\begin{align*}
\{1,2\}&=\{1\}\cup\{2\}, \\
\{2,5,6,8\}&=\{2\}\cup\{5,6\}\cup\{8\}, \\
\{1,2,3,4,7\}&=\{1,2,3,4\}\cup\{7\}, \\
\{1,2,7,8\}&=\{1\}\cup\{2\} \cup \{7 \} \cup \{8\}, \\
\{ 5,6,7,8,9 \}& = \{5,6 \} \cup \{7,8,9\}.
\end{align*}
Note that $\{1,2\}$ and $\{5,6,7,8,9\}$ are $\ch_4$-constructed from siblings in $\ch_4$. Meanwhile,  the other three sets are $\ch_4$-constructed using elements that are not siblings.

Here are the lemmas that handle sets that are not contained in $\ch$. Their proofs are deferred to the subsection that follows.

\begin{lemma}
\label{lem:notcon}
Consider a set $B \notin \ch$ that is not $\ch$-constructible. Then the set $B$ is not separable on $\setvec{\ch}$.
\end{lemma}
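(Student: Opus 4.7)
I would produce explicit outcomes witnessing non-separability of $B$ on $\setvec{\ch}$. Since $B$ is not $\ch$-constructible, $\construct{B}{\ch} \subsetneq B$, so I first choose a witness $b \in B - \construct{B}{\ch}$ and let $A$ be the smallest set in $\ch$ containing $b$; such an $A$ is well-defined because $[n] \in \ch$ and the tree-character property forces the $\ch$-sets through $b$ to form a chain. The first subtask is to verify $A \in U(\ch)$: every $\ch$-set properly contained in $A$ is a strict descendant of $A$ in the Hasse diagram, and by minimality of $A$ none of them contains $b$, so $A$ cannot be written as a union of elements of $\ch - \{A\}$. If $A \subseteq B$ then $A \in \Upsilon'_\ch(B)$, so $A \subseteq \construct{B}{\ch}$ and hence $b \in \construct{B}{\ch}$, contradicting the choice of $b$. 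Therefore there exists $a \in A - B$.

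Next, following the sparse-outcome notation from Section~\ref{chap:preference}, I set $x_B = 0_*$, $y_B = 1_b 0_*$, $u_{\comp{B}} = 0_*$ and $v_{\comp{B}} = 1_a 0_*$, producing the four outcomes $0_*$, $1_b 0_*$, $1_a 0_*$ and $1_a 1_b 0_*$ on $[n]$. For each $C \subseteq [n]$, the value $\setvec{C}$ flips on the pair $(x_B u_{\comp{B}}, y_B u_{\comp{B}})$ exactly when $b \in C$; and on the pair $(x_B v_{\comp{B}}, y_B v_{\comp{B}})$ the flip direction additionally depends on whether $a \in C$. Writing $\setvec{\ch} = \sum_C \gamma_C \setvec{C}$, direct bookkeeping yields
\begin{align*}
\Delta_u &:= \setvec{\ch}(x_B u_{\comp{B}}) - \setvec{\ch}(y_B u_{\comp{B}}) = \sum_{C:\, b \in C} \gamma_C, \\
\Delta_v &:= \setvec{\ch}(x_B v_{\comp{B}}) - \setvec{\ch}(y_B v_{\comp{B}}) = \sum_{C:\, b \in C,\, a \notin C} \gamma_C \;-\; \sum_{C:\, a,\,b \in C} \gamma_C.
\end{align*}

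The final step is an estimate on these sums. By minimality of $A$ and the tree structure, every $C \in \ch$ with $b \in C$ satisfies $A \subseteq C$, and hence $a \in C$. Consequently no $\ch$-element contributes to the positive portion of $\Delta_v$, which collects only siblink coefficients and is bounded above by $\sum_{D \in \linkage} \coeflink_D < 1$, from \eqref{eq:coeflink}. The negative portion of $\Delta_v$ contains the single $\ch$-term $\coef_A = \base^{g(A)} \geq 1$. Therefore $\Delta_v < 1 - 1 = 0$, while $\Delta_u \geq \coef_A \geq 1 > 0$. This gives $x_B u_{\comp{B}} \succ y_B u_{\comp{B}}$ but $x_B v_{\comp{B}} \prec y_B v_{\comp{B}}$, certifying that $B$ is not separable on $\setvec{\ch}$.

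\textbf{Main obstacle.} The delicate step is the upper bound on $\Delta_v$. The tree-character hypothesis is essential: it is exactly what guarantees that no element of $\ch$ contributes positively to $\Delta_v$, so only the small siblink perturbations $\coeflink_D$ appear on the positive side. The single term $\coef_A \geq 1$ then dominates, precisely because the coefficients $\coeflink_D$ were engineered in \eqref{eq:coeflink} so that their total mass is strictly less than one. Without that quantitative separation between $\coef_A$ and $\sum_D \coeflink_D$, the sign of $\Delta_v$ could not be pinned down; and without choosing $b$ via its minimal unbreakable $\ch$-ancestor $A$, the cancellation producing a purely negative $\ch$-contribution would fail.
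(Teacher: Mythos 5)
Your proof is correct and follows essentially the same route as the paper's: you pick an element $b$ of $B - \construct{B}{\ch}$, identify the minimal set $A \in \ch$ containing it (the paper's minimal $A' \in \mathcal{F}$), use the tree structure to get $a \in A - B$ and the fact that every $\ch$-set through $b$ contains $A$ (hence $a$), and then test separability with the same four sparse outcomes $0_*$, $1_b 0_*$, $1_a 0_*$, $1_a 1_b 0_*$. Your bookkeeping for $\Delta_u$ and $\Delta_v$ is organized a little differently (summing directly over sets containing $b$ rather than splitting into $\coef_{A}$ plus its ancestors), and is if anything slightly cleaner, but the underlying argument is the same.
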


 \begin{lemma}
\label{lem:siblings}
Consider a constructible set $B\notin\ch$ where the elements of $\Upsilon_{\ch}(B)$ are siblings in $\ch$. The set $B$ is not separable on $\setvec{\ch}$.
\end{lemma}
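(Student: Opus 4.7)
My plan is to construct explicit witness outcomes that certify the non-separability of $B$. Write $\Upsilon_{\ch}(B) = \{A_{i_1}, \ldots, A_{i_k}\}$; since $B \notin \ch$ is constructible, $k \geq 2$ (else $B = A_{i_1} \in \ch$), the $A_{i_j}$ share a common $\ch$-parent $P$, and $B = \bigcup_j A_{i_j} \subsetneq P$. Hence there exists at least one sibling $A'$ of the $A_{i_j}$ under $P$ (another $\ch$-child not in $\Upsilon_{\ch}(B)$, or the ghost child) that is disjoint from $B$. Each $A_{i_j}$ is unbreakable, so $A_{i_j}$ is either a $\ch$-leaf or has a ghost child (otherwise its $\ch$-children would cover it, contradicting unbreakability), and consequently some ``free'' element $s_j \in A_{i_j}$ lies in no proper $\ch$-subset of $A_{i_j}$.

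Fix free elements $s_1 \in A_{i_1}$, $s_2 \in A_{i_2}$, and any $a \in A'$, and consider the witnesses $x_B = 1_{s_1} 0_*$, $y_B = 1_{s_2} 0_*$, $u_{\comp{B}} = 0_*$, and $v_{\comp{B}} = 1_a 0_*$. A direct computation using $\setvec{T}(z) = \even{\outset{z} \cap T}$ shows that $\setvec{T}(x_B u_{\comp{B}}) - \setvec{T}(y_B u_{\comp{B}})$ equals $+1$ if $s_2 \in T$ and $s_1 \notin T$, equals $-1$ if $s_1 \in T$ and $s_2 \notin T$, and is $0$ otherwise; the analogous difference with $v_{\comp{B}}$ equals the same quantity when $a \notin T$ and its negation when $a \in T$. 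So only sets $T$ distinguishing $s_1$ from $s_2$ contribute to $D_1 := \setvec{\ch}(x_B u_{\comp{B}}) - \setvec{\ch}(y_B u_{\comp{B}})$ or $D_2 := \setvec{\ch}(x_B v_{\comp{B}}) - \setvec{\ch}(y_B v_{\comp{B}})$, and their contributions to $D_2$ flip sign exactly when $a \in T$.

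Next I would show that the $\ch$-summand of $\setvec{\ch}$ contributes zero to both $D_1$ and $D_2$. By the tree property, any $T \in \ch$ distinguishing $s_1$ from $s_2$ must be a $\ch$-subset of $A_{i_1}$ or of $A_{i_2}$; freeness eliminates proper $\ch$-subsets, leaving only $T = A_{i_1}$ and $T = A_{i_2}$, whose equal coefficients $\coef_{A_{i_1}} = \coef_{A_{i_2}} = \base^{g(A_{i_1})}$ cancel. Only siblink contributions remain, and the unique distinguishing siblinks containing $a$ are $A_{i_1} \cup A'$ and $A_{i_2} \cup A'$. A short computation then yields
$$D_2 - D_1 \;=\; 2\bigl(\coeflink_{A_{i_1}\cup A'} - \coeflink_{A_{i_2}\cup A'}\bigr),$$
which is nonzero because the two $\coeflink$ values are distinct powers of~$2$.

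The main obstacle is to upgrade this nonzero difference into a genuine sign reversal between $D_1$ and $D_2$. Since every $\coeflink_L = 2^{\rho(L) - 2^{2^n} - 1}$ is a distinct power of~$2$, both $D_1$ and $D_2$ are signed sums of distinct dyadic rationals whose signs are determined by the distinguishing siblink of maximal rank. When that siblink is one of $A_{i_1} \cup A'$ or $A_{i_2} \cup A'$---a condition that can often be arranged by a judicious choice of $A'$ and of indexing $(i_1,i_2)$---the sign reversal is immediate. The remaining case, in which every high-rank distinguishing siblink lies entirely within $\Upsilon_{\ch}(B)$, is the main technical challenge: I would attempt a swapping argument that replaces $s_1$ or $s_2$ by a free element of an interior sibling so that the offending leading siblink contains both chosen free elements and ceases to distinguish them, iterating until the leading distinguishing siblink contains $a$, at which point Lemma~\ref{lem:link-distinct} forces opposite signs and establishes the non-separability of $B$.
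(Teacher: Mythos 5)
The first two-thirds of your argument are sound and track the paper's strategy closely: the free elements $s_j$ exist because each $A_{i_j}$ is unbreakable, the only sets of $\ch$ that distinguish $x_B$ from $y_B$ are $A_{i_1}$ and $A_{i_2}$ themselves (equal coefficients, cancelling), and your identity $D_2-D_1=2\bigl(\coeflink_{A_{i_1}\cup A'}-\coeflink_{A_{i_2}\cup A'}\bigr)\neq 0$ checks out. But the gap you flag at the end is genuine and is not a formality: $D_1\neq D_2$ does not violate separability --- you need $D_1$ and $D_2$ to straddle zero. Since the siblink coefficients are distinct scaled powers of $2$ and the character part cancels, the sign of each $D_i$ is the sign of the contribution of the maximal-rank \emph{distinguishing} siblink, and the two signs differ only if that leading siblink contains $a$. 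When $|\Upsilon_{\ch}(B)|\geq 3$, the siblinks $A_{i_1}\cup A_{i_m}$ and $A_{i_2}\cup A_{i_m}$ (for a third member $A_{i_m}$) lie entirely inside $B$, distinguish $s_1$ from $s_2$, cannot be flipped by any $v_{\comp{B}}$, and can out-rank every $A_{i_j}\cup A'$ (take the internal siblings large and the external children small); for such configurations your witnesses give $D_1$ and $D_2$ of the same sign. The ``swapping argument'' is only gestured at, and it is the entire content of the hard case, so the proof is incomplete as written.

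For comparison, the paper's own proof flips one element in \emph{every} child of $P$ outside $B$ (real or ghost) and asserts the exact antisymmetry $D_1=-D_2\neq 0$, which makes the reversal immediate with no leading-term analysis. That assertion rests on the claim that the only distinguishing siblinks are those of the form $A_{i_j}\cup A_\ell$ with $A_\ell$ outside $B$ --- precisely the claim your more careful bookkeeping shows to fail in general: internal siblinks $A_{i_1}\cup A_{i_m}\subseteq B$, and ghost-child siblinks inside an $A_{i_j}$ that contain the free element $s_j$, also distinguish and are unaffected by $v_{\comp{B}}$. So your accounting of the distinguishing siblinks is actually the more accurate one; what is missing is a resolution. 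To close the gap you would need either to choose the pair from $\Upsilon_{\ch}(B)$, the element $a$, and possibly non-singleton witnesses $x_B,y_B$ so that every internal siblink has matching parities (forcing the leading distinguishing siblink to meet $\comp{B}$), or some other device --- and you should verify that such a choice always exists, which is not obvious and appears to be delicate exactly when $|\Upsilon_{\ch}(B)|$ is odd and at least $3$.
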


 \begin{lemma}
\label{lem:notsiblings}
Consider a constructible set $B\notin\ch$ where at least two elements of $\Upsilon_{\ch}(B)$ are not siblings in $\ch$.  The set $B$ is not separable on $\setvec{\ch}$.
\end{lemma}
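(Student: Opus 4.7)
The plan is to construct partial outcomes witnessing the non-separability of $B$ from the pair $A_1, A_2 \in \Upsilon_{\ch}(B)$ that are not siblings (WLOG, the pair may be chosen so that $P := \mathrm{LCA}(A_1, A_2) \not\subset B$, as the obstacle paragraph below will explain). Pick $a_1 \in A_1$ and $a_2 \in A_2$ and set $x_B = 1_{a_1} 0_*$ and $y_B = 1_{a_2} 0_*$, so these partial outcomes on $B$ differ only in positions $a_1$ and $a_2$. By Lemma \ref{overlap} combined with the tree structure of $\ch$, the set $F$ of $A \in \ch$ for which $\setvec{A}(x_B u_{\comp{B}}) \neq \setvec{A}(y_B u_{\comp{B}})$ consists exactly of those $A$ containing precisely one of $a_1, a_2$. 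The tree structure then forces $F$ to decompose as a disjoint union $\mathcal{C}_1 \sqcup \mathcal{C}_2$ of two chains: the proper $\ch$-ancestors of $A_1$ that are strict subsets of $P$, together with $A_1$ itself, and similarly for $A_2$.

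With $u_{\comp{B}} = 0_*$, the $\ch$-part of $\setvec{\ch}(x_B u) - \setvec{\ch}(y_B u)$ equals $\sum_{A \in \mathcal{C}_2} \coef_A - \sum_{A \in \mathcal{C}_1} \coef_A$, and the total differs from this by less than $1$ in absolute value via the siblink correction of Lemma \ref{lem:link-distinct}. To flip the sign I choose $v_{\comp{B}} = 1_d 0_*$ for a carefully selected $d \in \comp{B}$. Let $\tilde{P}_2$ denote the smallest proper $\ch$-ancestor of $A_2$ that is not contained in $B$; since $P \not\subset B$, we have $\tilde{P}_2 \subseteq P$. Take $d$ in the ghost child of $\tilde{P}_2$ when $\tilde{P}_2$ is unbreakable, or in a non-$A_2$-branch child $\ch$-subtree of $\tilde{P}_2$ outside $B$ when $\tilde{P}_2$ is breakable; minimality of $\tilde{P}_2$ guarantees such a $d$ exists in $\comp{B}$. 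The sets in $F$ whose signs flip under $v$ then form a nonempty tail of $\mathcal{C}_2$ beginning at $\tilde{P}_2$, and Lemma \ref{lem:basesum}, together with the strict inequality $\base < 2$, guarantees that this flip changes the sign of the $\ch$-part of the difference.

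The main obstacle is the degenerate case where every non-sibling pair available in $\Upsilon_{\ch}(B)$ has its LCA inside $B$: then the chains $\mathcal{C}_1, \mathcal{C}_2$ live entirely within $B$, and no $d \in \comp{B}$ can reach $F$ via the ghost construction. The fix exploits constructibility together with $B \neq [n]$: since $B$ strictly contains $P$, there exists some $A' \in \Upsilon_{\ch}(B)$ disjoint from $P$; the pair $(A_1, A')$ is also not siblings, with a strictly higher LCA, and iterating this replacement we eventually obtain a non-sibling pair whose LCA escapes $B$ (before reaching $[n]$, which is not contained in $B$). The siblink coefficients $\coeflink_B$ in $\setvec{\ch}$ then serve only as tiebreakers (Lemma \ref{lem:link-distinct}), becoming essential in precisely the arithmetic configurations where the $\ch$-part of the difference would cancel exactly---a degenerate coincidence generically prevented by the choice $\base = 2 - 2^{-(n-1)} \neq 2$.
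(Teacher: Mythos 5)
Your overall strategy---indicator partial outcomes on representatives of two non-sibling members of $\Upsilon_{\ch}(B)$, reduction of the difference $\entry{\setvec{\ch}}{x}-\entry{\setvec{\ch}}{y}$ to two ancestral chains below the least common ancestor $P$, and a parity flip induced by a single element of $\comp{B}$---is the same as the paper's, and your preliminary reduction to the case $P \not\subseteq B$ is a reasonable addition (the paper glosses over why its flip element lies in $\comp{B}$). However, there are two genuine gaps. First, you take $a_i \in A_i$ arbitrary, so your claim that $F$ decomposes into the two chains $\mathcal{C}_1 \sqcup \mathcal{C}_2$ is false: if $a_1$ lies in a proper $\ch$-descendant $D \subsetneq A_1$, then $D \in F$ and its coefficient $\coef_D = \base^{g(D)} > \base^{g(A_1)}$ pollutes the computation. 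You must choose $a_i \in A_i - \construct{A_i}{\ch}$, which exists because each $A_i$ is unbreakable; this is exactly what the paper does.

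Second, and more seriously, the assertion that flipping the tail of $\mathcal{C}_2$ beginning at $\tilde{P}_2$ ``changes the sign of the $\ch$-part'' is unsupported and false in general. Writing $\Delta_u = \sum_{A\in\mathcal{C}_2}\coef_A - \sum_{A\in\mathcal{C}_1}\coef_A$ and $\mathcal{T}$ for the flipped tail, your perturbation gives $\Delta_v = \Delta_u - 2\sum_{A\in\mathcal{T}}\coef_A$, and nothing forces $\Delta_u$ and $\Delta_v$ to have opposite signs. For instance, if $A_1$ is a child of $P$ at generation $g+1$ while $A_2$ sits at generation $g+5$ with $\tilde{P}_2$ at generation $g+1$ (all intermediate ancestors of $A_2$ lying inside $B$, which the tree structure permits when they are breakable), then $\Delta_u = \sum_{i=g+2}^{g+5}\base^i > 0$ while the flip removes only $2\base^{g+1}$, leaving $\Delta_v > 0$. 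If instead $g(A_1) > g(A_2)$, then $\Delta_u$ is already negative and your flip only makes it more negative; and if $g(A_1)=g(A_2)$, then $\Delta_u = 0$ and the preference under $u$ is decided by the siblink terms, which you do not analyze. (There is also the degenerate case $\tilde{P}_2 = P$, e.g.\ when $A_2$ is a child of $P$, where your tail is empty and the flip does nothing.) The paper avoids all of this by orienting the \emph{deeper} set as $B_1$, flipping the parity of its entire ancestor chain strictly below $P$ via an element of its parent $R_1$, and invoking the two-sided estimate of Lemma \ref{lem:basesum}, which is tuned so that the lone deepest coefficient $\base^{g_1}$ just barely exceeds the sum of all shallower ones. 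Your appeal to ``$\base < 2$'' gestures at that lemma but never supplies the actual inequality certifying the sign reversal, so the non-separability of $B$ is not established.
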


 %%%%
 %%%%
 %%%%

\begin{proof}[of Theorem \ref{thm:tree-char}]
If $B \in \ch$, then $B$ is separable by Lemma \ref{lem:tree-charsep}. Consider $B \notin \ch$. If $B$ is not $\ch$-constructible, then $B$ is nonseparable by Lemma \ref{lem:notcon}. If $B$ is constructible, then either this construction uses a set of siblings or does not. Lemmas \ref{lem:siblings} and \ref{lem:notsiblings} show that $B$ is not separable in either case.
In summary,  only elements of $\ch$ are separable. Therefore, $\character(\setvec{\ch})=\ch$.
\qed \end{proof}

All that remains is to justify these four  lemmas. The proofs become more intricate as we progress. In particular,  Lemma \ref{lem:siblings} requires ghost children and siblinks  to force non-separabilty among unions of siblings in the tree character. Let us begin.
%%%%%%%%%%%%
%%%%%%%%%%%%
%%%%%%%%%%%%

\subsection{Proofs of the Tree Character Lemmas}
\label{sec:tree-lemma}

We start with a few elementary observations and some helpful notation. 
Let $\ch$
be a tree character of $[n]$ and let
$\linkage = \cup_{A \in \ch} \linkage(A)$
be the union of all siblinks of $\ch$. The vector $\setvec{\ch}$ constructed in Theorem \ref{thm:tree-char}
is $$
\setvec{\ch}= \sum_{A \in \ch} \coef_A \setvec{A} + \sum_{A \in \linkage} \coeflink_A \setvec{A}. 
$$
The coefficients are given by $\coef_A = \base^{g(A)}$ where $\base=2-2^{-(n-1)}$ and $g(A)$ is the generation of $A \in \ch$, and  $\coeflink_A = 2^{\rank(A)-2^n-1}$ where $\rank(A)$ is the rank of the set $A$ in our ordering of $\powerset{[n]}$ from Definition \ref{def:rank}. For convenience, we define $\coef_S = 0$ when $S \notin \ch$ and $\coeflink_S = 0$ when $S \notin \linkage$, so that we denote the coefficient of $S$ as $\coef_S + \coeflink_S$, regardless of whether $S$ is a member of the character or the sibling linkage.

Suppose that $A_{r} \in \ch$ is in the $r$th generation and that  $A_{r} \subset A_{r-1} \subset \cdots \subset A_{0} = [n]$ is its complete ancestral chain of supersets in $\ch$. Then Lemma \ref{lem:basesum} ensures that
\begin{equation}
\label{gparents}
\coef_{A_r} = \base^r > \sum_{j=0}^{r-1} \base^j = \sum_{j=0}^{r-1} \coef_{A_j}.
\end{equation}
Next, we observe that $| \linkage| < 2^n$ because each siblink is the union of two disjoint nonempty sets. Equation \eqref{eq:coeflink} yields
\begin{equation}
\label{eq:1sib}
 \sum_{A \in \linkage} \coeflink_A  \leq  \frac{| \linkage |}{2^n} < 1.
\end{equation}

We employ the following notation for partial sums of the coefficients of $\setvec{\ch}$ that are even with respect to a given set $A$. For  $\calS \subset  \ch$ and $\calT \subset \linkage$, we define
\begin{align*}
\coefsum{A}{\calS} &= \sum_{S \in \calS} \even{A \cap S}  \, \coef_S, \\
\coefsumlink{A}{\calT} &=  \sum_{T \in \calT} \even{A \cap T} \,  \coeflink_T, 
\end{align*}
where the parity indicator function $\even{\cdot}$ is defined in equation \eqref{eqn:even}.
It will  be convenient to use this same notation with a partial outcome $x_A$, in which case we define
$\coefsum{x_A}{\calS}= \coefsum{\outset{x_A}}{\calS}$ and
$\coefsumlink{x_A}{\calT}= \coefsumlink{\outset{x_A}}{\calT}$,
where we use the hatted notation of equation \eqref{eqn:outset-def}.
This partial sum notation gives a compact expression for the entry of $\setvec{\ch}$ corresponding to set $A$. Indeed, equation \eqref{eqn:voterentry} states that the  entry  $\entry{\setvec{B}}{A}=1$ if and only if $A$ is even in $B$. Therefore, the entry of $\setvec{\ch}$ corresponding to set $A$ is 
$$
\entry{\setvec{\ch}}{A} = \coefsum{A}{\ch} + \coefsumlink{A}{\linkage}.
$$
Given two sets $A$ and $B$, we will often need to compare $\entry{\setvec{\ch}}{A}$ with $\entry{\setvec{\ch}}{B}$. Equation \eqref{eq:1sib} leads to the handy observation
\begin{equation}
\label{eq:within1}
 \coefsum{A}{\ch} - \coefsum{B}{\ch} - 1  < \entry{\setvec{\ch}}{A} - \entry{\setvec{\ch}}{B}  <  \coefsum{A}{\ch} - \coefsum{B}{\ch} +1.
\end{equation}
In particular, $\coefsum{A}{\ch}-\coefsum{B}{\ch} \in \ZZ$, so equation \eqref{eq:1sib} allows us to ignore the fractional contribution from the siblink coefficients  when this difference is nonzero.

For a given set $B \subset [n]$, and a collection of subsets $\calT \subset \powerset{[n]}$, we will also be interested in members of $\calT$ that are  subsets of $B$, supersets of $B$ or disjoint from $B$.  
We define
\begin{align*}
\strictsubsets{B}{\calT} &= \{ T \in \calT: T \subsetneq B\}, \\
\subsets{B}{\calT} &=\strictsubsets{B}{\calT} \cup \{B \}, \\
\supersets{B}{\calT} &= \{ T \in \calT : B \subsetneq T \}, \\
\disjointsets{B}{\calT} &= \{ T \in \calT :  B \cap T = \emptyset \}.
\end{align*}
For $A \in \ch$, the tree structure of $\ch$ leads to the partitions
$
\ch = \subsets{A}{\ch} \cup \supersets{A}{\ch} \cup \disjointsets{A}{\ch}
$
and
$
\linkage = \subsets{A}{\linkage} \cup \supersets{A}{\linkage} \cup \disjointsets{A}{\linkage}.
$
Consequently, we   decompose   $\entry{\setvec{\ch}}{A}$ as
\begin{align*}
\entry{\setvec{\ch}}{A} 
&= \coefsum{A}{\subsets{A}{\ch}} + \coefsum{A}{\supersets{A}{\ch}}+  \coefsum{A}{\disjointsets{A}{\ch}} \\
& \qquad +\coefsumlink{A}{\subsets{A}{\linkage}} + \coefsumlink{A}{\supersets{A}{\linkage}}+  \coefsumlink{A}{\disjointsets{A}{\linkage}},
\end{align*}
 in the separability proofs that follow. But first, we prove a quick but useful lemma.
\begin{lemma}
\label{childdiff}
Let $A \in \ch$. 
Consider distinct outcomes $x =x_{A}u_{[n]-A}$  
and $y =y_{A}u_{[n]-A}$  that are identical  on $[n]-A$.
 If $\coefsum{x_{A}}{\subsets{A}{\ch}} \neq \coefsum{y_{A}}{\subsets{A}{\ch}}$ then
$$| \coefsum{x_{A}}{\subsets{A}{\ch}} - \coefsum{y_{A}}{\subsets{A}{\ch}}| \geq \coef_A \geq 1.$$
\end{lemma}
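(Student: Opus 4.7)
The plan is to use induction on the depth of the subtree of $\ch$ rooted at $A$, decomposing the sum by children via the tree-character structure.

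By the tree property, every $S \in \strictsubsets{A}{\ch} \setminus \{\emptyset\}$ is contained in a unique non-ghost child $A_i$ of $A$. Letting $A_1, \ldots, A_k$ be the non-ghost children, the sum splits cleanly as
\[
\coefsum{x_A}{\subsets{A}{\ch}} = \even{\outset{x_A}}\,\coef_A + \sum_{i=1}^{k} \coefsum{x_{A_i}}{\subsets{A_i}{\ch}} + (\text{constant from }\emptyset).
\]
The $\emptyset$-term cancels in the difference $\Delta := \coefsum{x_A}{\subsets{A}{\ch}} - \coefsum{y_A}{\subsets{A}{\ch}}$, giving $\Delta = \delta_0 + \sum_{i=1}^{k} \Delta_i$, where $\delta_0 = (\even{\outset{x_A}} - \even{\outset{y_A}})\coef_A \in \{-\coef_A,\,0,\,\coef_A\}$ and each $\Delta_i$ is the analogous difference at $A_i$.

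The base case is when $A$ is a leaf of $\ch$: only $A$ itself (and possibly $\emptyset$) contributes, so $\Delta = \delta_0 \in \{-\coef_A, 0, \coef_A\}$ and the claim is immediate. For the inductive step, the hypothesis guarantees that each nonzero $\Delta_i$ satisfies $|\Delta_i| \geq \coef_{A_i} = \base\,\coef_A$. To bound $|\Delta| \geq \coef_A$ when $\Delta \neq 0$, I would identify the deepest generation $r$ at which a nonzero contribution arises along some ancestral chain in the subtree, then invoke Lemma~\ref{lem:basesum} — which yields $\base^r > \sum_{j<r}\base^j$ — to conclude that the deepest coefficient strictly dominates the sum of all shallower coefficients along that chain. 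Combined with the parity constraint relating $\even{\outset{x_A}}$ to $\even{\outset{x_{A_i}}}$ and the ghost block $\even{\outset{x_G}}$, this pins down the sign of $\delta_0$ in terms of the signs driving the $\Delta_i$'s, preventing the magnitude of $\Delta$ from falling below $\coef_A$.

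The main obstacle is controlling cancellations across sibling subtrees. A priori, two siblings $\Delta_i, \Delta_j$ with opposite signs could partially cancel, and together with an adverse $\delta_0$ this might drive $|\Delta|$ toward zero. Resolving this requires carefully selecting the witnessing branch (namely the one housing the deepest nonzero contribution) and then performing a case analysis on which $\Delta_i$ vanish. The combination of Lemma~\ref{lem:basesum} (to beat shallower cancellations) with the parity identity (to pin down admissible sign patterns of $\delta_0$ relative to the $\Delta_i$'s) is what should deliver the bound $|\Delta| \geq \coef_A$.
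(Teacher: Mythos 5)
Your proposal stops exactly where the lemma's content begins: the step you label ``the main obstacle'' --- controlling cancellation between $\delta_0$ and the sibling contributions $\Delta_i$ --- is announced (``I would identify the deepest generation\dots'', ``this should deliver the bound'') but never carried out, and it is the whole point of the statement. Moreover, the route you sketch cannot be completed as described. Your inductive hypothesis only bounds magnitudes, $|\Delta_i|\geq \coef_{A_i}=\base\,\coef_A$ when $\Delta_i\neq 0$, and that is compatible with, say, $\delta_0=+\coef_A$ together with a single leaf child contributing $\Delta_1=-\coef_{A_1}$ exactly. This configuration is realizable: take $A=[n]$ with child $A_1=\{1\}$ in $\ch$, and $x_A=110\cdots 0$, $y_A=010\cdots 0$; then the difference is $1-\base$, so $|\Delta|=(\base-1)\coef_A$ with $\base-1=1-2^{-(n-1)}<1$. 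No appeal to Lemma~\ref{lem:basesum} or to ``the deepest coefficient dominates'' can recover $|\Delta|\geq\coef_A$ from magnitude information alone, because here the true value really is smaller than $\coef_A$ when the coefficients are literally $\base^{g}$ with $\base<2$.

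The paper's proof is a one-line divisibility argument, with no induction, no decomposition by children, and no case analysis: every summand occurring in $\coefsum{x_{A}}{\subsets{A}{\ch}}$ or $\coefsum{y_{A}}{\subsets{A}{\ch}}$ is some $\coef_B$ with $B\subseteq A$, hence $g(B)\geq g(A)$; writing $\coef_B=2^{g(B)}$, each such term is an integer multiple of $\coef_A=2^{g(A)}$, so the difference is a nonzero integer multiple of $\coef_A$ and therefore has absolute value at least $\coef_A$. Integrality of the ratios $\coef_B/\coef_A$ --- not domination of one term over the others --- is the missing idea in your write-up. You should also be aware that the paper's proof of this lemma silently uses base $2$ (it writes $\coef_B=2^{g(B)}$ and says ``divides''), whereas the coefficients are defined elsewhere with $\base=2-2^{-(n-1)}$; with that $\base$ the example above contradicts the stated inequality, so the divisibility step is essential and the base discrepancy in the paper is a real issue worth flagging, but it is not one your argument repairs.
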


\begin{proof}
If $\coef_B$ is a summand in either $\coefsum{x_{A}}{\subsets{A}{\ch}}$ or $\coefsum{y_{A}}{\subsets{A}{\ch}}$, then  $B \subset A$, so that $\coef_B = 2^{g(B)} \geq 2^{g(A)} = \coef_A$. Since $\coef_A$ divides every term in both $\coefsum{x_{A}}{\subsets{A}{\ch}}$ and $\coefsum{y_{A}}{\subsets{A}{\ch}}$, it also divides their difference.
\qed \end{proof}

We are now prepared to prove our four lemmas. The non-separability proofs use the notation $x_{\{1,2,3\}} = 1_3 0_* = 001$ introduced in Section \ref{sec:basis} for constructing sparse outcomes on $[n]$.

%%%%%%%%%%%%%%%%%%%%%
%
% Chi sets are separable
%
%%%%%%%%%%%%%%%%%%%%%
\subsubsection{Proof of Lemma \ref{lem:tree-charsep}}

We prove that if $A \in \ch$ then the ordering induced by $\setvec{{\ch}}$ is separable on $A$.
Consider  distinct  partial outcomes $x_{A}$ and
$y_{A}$. Let $u_{[n]-A}$ be any partial outcome on $[n]-A$ and define
 $x=x_{A}u_{[n]-A}$ and
$y=y_{A}u_{[n]-A}$. We claim that the sign of the difference
$$
\entry{\setvec{\ch}}{x} - \entry{\setvec{\ch}}{y}
= \left( \coefsum{x}{\ch} + \coefsumlink{x}{\linkage}  \right) -
\left( \coefsum{y}{\ch} + \coefsumlink{y}{\linkage}  \right)
$$ 
(1) only depends upon $x_{A}$ and $y_{A}$, and (2)  is independent of the particular choice of $u_{[n]-A}$.
Recall that in the ordering induced by $\setvec{\ch}$, the inequality $\entry{\setvec{\ch}}{x} > \entry{\setvec{\ch}}{y}$ corresponds to the preference $x \succ y$. So this claim is equivalent to the separability of set $A$.

\textbf{Case 1:} $\coefsum{x}{\subsets{A}{\ch}} > \coefsum{y}{\subsets{A}{\ch}}$.   We claim that $x_{A}u_{\comp{A}} \succ y_{A}u_{\comp{A}}$. We have
$$
\entry{\setvec{\ch}}{x} - \entry{\setvec{\ch}}{y}
> \coefsum{x}{\ch}  -
\coefsum{y}{\ch} -1
$$
by \eqref{eq:within1}.
Corollary \ref{disjointed} shows that when $B \in [n] - A$, we have  $\entry{\setvec{B}}{x_{A}u_{[n]-A}}=\entry{\setvec{B}}{y_{A}u_{[n]-A}}$. 
Therefore, we can ignore the sets in $\disjointsets{A}{\ch} $ when calculating $\entry{\setvec{\ch}}{x} - \entry{\setvec{\ch}}{y}$.
We have
\begin{align*}
\entry{\setvec{\ch}}{x} - \entry{\setvec{\ch}}{y} 
&> \coefsum{x_{A}}{\subsets{A}{\ch}} + \coefsum{x_{A}}{\supersets{A}{\ch}} \\
& \qquad - \coefsum{y_{A}}{\subsets{A}{\ch}} - \coefsum{y_{A}}{\supersets{A}{\ch}} -1
\\
%&\geq
%\coefsum{x_{A_i}}{\subsets{A_l}{\ch}} - \coefsum{y_{A_l}}{\subsets{A_l}{\ch}} 
%- \coefsum{y_{A_l}}{\supersets{A_l}{\ch}} - 1. \\
&\geq 
2^{g(A)} - \coefsum{y_{A}}{\supersets{A}{\ch}} -1 \\
& \geq 2^{g(A)} - (2^{g(A)} -1)  -1= 0
\end{align*}
by Lemma \ref{childdiff} and equation \eqref{gparents}. 
We conclude that $x_{A_l}u_{[n]-A_l} \succ y_{A_l}u_{[n]-A_l}$ for every choice of $u_{[n]-A_l}$.

\textbf{Case 2:}  $\coefsum{x}{\subsets{A}{\ch}} < \coefsum{y}{\subsets{A}{\ch}}$. We have $x_{A}u_{[n]-A} \prec
y_{A}u_{[n]-A}$  for every choice of $u_{[n]-A}$ by reversing the roles of $x$ and $y$ in Case 1.

\textbf{Case 3:}  $\coefsum{x}{\subsets{A}{\ch}} = \coefsum{y}{\subsets{A}{\ch}}$. We claim that 
\begin{equation}
\label{eq:case3}
\entry{\setvec{\ch}}{x} - \entry{\setvec{\ch}}{y} = 
\coefsumlink{x}{\linkage} - \coefsumlink{y}{\linkage}.
\end{equation}
As in Case 1, we can ignore the sets in $\disjointsets{A}{\ch}$.
Next, observe that $x_{A}$ and $y_{A}$ have the same parity. If this were not true, then exactly one of $\coefsum{x}{\subsets{A}{\ch}}$ and $\coefsum{y}{\subsets{A}{\ch}}$  would include $\coef_A$, which would guarantee $\coefsum{x}{\subsets{A}{\ch}} \neq \coefsum{y}{\subsets{A}{\ch}}$. Indeed, $\coef_A$ is the unique smallest  summand, so  no combination of other terms could properly compensate for the small difference.  By Corollary \ref{supersets}, the matching parity of $x_{A}$ and $y_{A}$ means that $\coefsum{x_{A}}{\supersets{A}{\ch}} = \coefsum{y_{A}}{\supersets{A}{\ch}}$. Since $\supersets{A}{\ch}$ is a nested chain of subsets, we conclude that 
$\coefsum{x}{\supersets{A}{\ch}} = \coefsum{y}{\supersets{A}{\ch}}$ as well. Our assumption that
 $\coefsum{x}{\subsets{A}{\ch}} = \coefsum{y}{\subsets{A}{\ch}}$ means that 
$\coefsum{x}{\ch} = \coefsum{y}{\ch}$, so equation \eqref{eq:case3} holds.

We now calculate $\coefsumlink{x}{\linkage} - \coefsumlink{y}{\linkage}.$ We can ignore the sets in  $\disjointsets{A}{\linkage}$ since their contributions only depend on the shared partial outcome $u_{[n]-A}$. Since $x_{A}$ and $y_{A}$ have the same parity,  Corollary \ref{supersets} guarantees that  $\coefsumlink{x_{A}}{\supersets{A}{\linkage}}$ = $\coefsumlink{y_{A}}{\supersets{A}{\linkage}}$.
Therefore,
$$
\entry{\setvec{\ch}}{x} - \entry{\setvec{\ch}}{y} 
= \coefsumlink{x_{A}}{\subsets{A}{\linkage}} - \coefsumlink{y_{A}}{\subsets{A}{\linkage}}
$$
and this value is independent of the choice of partial outcome $u_{[n]-A}$.
This proves that every $A \in \ch$ is separable in the partial order induced by $\setvec{\ch}$. 
\qed

%%%%%%%%%%%%%%%%%%%%%
%
% Non-constructible is not separable
%
%%%%%%%%%%%%%%%%%%%%%
\subsubsection{Proof of Lemma \ref{lem:notcon}}

We will show that if a set $B \subset [n]$ is not $\ch$-constructible, then the set $B$ is not separable on $\setvec{\ch}$.
We assemble two partial outcomes $x_B \neq y_B$  so that the preference between $x=x_Bu_{\comp{B}}$ and $y=y_Bu_{\comp{B}}$ depends on the choice of $u_{\comp{B}}$.

Let $B \subsetneq [n]$ be a nonempty set that is not $\ch$-constructible. Let $K=\construct{B}{\ch} \subsetneq B$ be the $\ch$-construct of $B$.
(Note that we might have $K = \emptyset$; the set $B=\{1\}$ in the Hasse diagram of Figure \ref{fig:char-to-vec-example} is one such example.) 
Consider the set $\mathcal{F} = \{ A \in  \ch \mid A \cap (B-K) \neq \emptyset \}$. Note that $\mathcal{F} \neq \emptyset$ since $[n] \in \mathcal{F}$. Pick a minimal set $A' \in \mathcal{F}$, meaning that $A'$ does not contain any other member of $\mathcal{F}$. 
 Observe that $A' - B \neq \emptyset$ by our choice of $K$.
Let $a_1 \in A'\cap (B-K)$ and let $a_2 \in A' - B$.
By the structure of our tree character and the minimality of $A$, if $a_1 \in A$ for some $A \in \ch$, then $A' \subset A$.

We have the freedom to construct $x=x_Bw_{\comp{B}}$ and $y=y_Bw_{\comp{B}}$ any way we like. Using the notation introduced in section \ref{sec:basis}, and recalling that $a_1 \in B$ and $a_2 \notin B$, we take
$$
\begin{array}{rclcrcl}
   x_B &=&  z_{*}, & \quad&  u_{\comp{B}} &=&  z_{*},  \\
   y_B &=&  1_{a_1} z_{*}, && v_{\comp{B}} &=&  1_{a_2}z_{*}.
\end{array}
$$

Let $S \in \ch \cup \linkage$ such that $a_1 \notin S$. By Corollary \ref{disjointed}, 
we have the equality $\entry{\setvec{S}}{x_Bw_{\comp{B}}} = \entry{\setvec{S}}{y_Bw_{\comp{B}}}$.
This means that for any partial outcome $w_{\comp{B}}$, the difference between $\entry{\setvec{\ch}}{x_Bw_{\comp{B}}}$ and $\entry{\setvec{\ch}}{y_Bw_{\comp{B}}}$ must be caused by coefficients of sets $S \in \ch \cup \linkage$ with $a_1 \in S$. As noted above,  $A' \subset S$ because $A'$ is the minimal set in $\mathcal{F}$ that contains $a_1$. Therefore $a_2 \in S$ as well.

The preference order of $x_Bw_{\comp{B}}$ and $y_Bw_{\comp{B}}$ is determined by the sign of $\entry{\setvec{\ch}}{{x_Bw_{\comp{B}}}} - \entry{\setvec{\ch}}{{y_Bw_{\comp{B}}}} $. We have
\begin{align*}
\coefsum{x_Bw_{\comp{B}}}{\ch} - \coefsum{y_Bw_{\comp{B}}}{\ch}   &= 
  \coef_{A'} \left(\entry{\setvec{A'}}{{x_Bw_{\comp{B}}}} -   \entry{\setvec{A'}}{{y_Bw_{\comp{B}}}} \right) \\
  & \qquad +  
\!\!\! \sum_{A \in \supersets{A'}{\ch}} \!\!\!\!\! \coef_A \left(\entry{\setvec{A}}{{x_Bw_{\comp{B}}}}  -\entry{\setvec{A}}{{y_Bw_{\comp{B}}}} \right).
\end{align*}
We  use equation \eqref{eq:within1} to bound the impact of the sublink coefficients.
 Observe that $x_Bu_{\comp{B}}$ is even in $A'$ while $y_Bu_{\comp{B}}$ is odd in $A'$. Therefore,
$$
\entry{\setvec{\ch}}{{x_Bu_{\comp{B}}}} - \entry{\setvec{\ch}}{{y_Bu_{\comp{B}}}} > \coef_{A'} - \!\!\!\!  \sum_{A \in \supersets{A'}{\ch}} \!\!\!\!\! \coef_A  \,\,\,  - 1\geq 0
$$
by equation \eqref{gparents}. Similarly,  $x_Bv_{\comp{B}}$ is odd in $A'$ while $y_Bv_{\comp{B}}$ is even in $A'$, so
$$
\entry{\setvec{\ch}}{{x_Bv_{\comp{B}}}} - \entry{\setvec{\ch}}{{y_Bv_{\comp{B}}}} < -\coef_{A'} + \!\!\!\!  \sum_{A \in \supersets{A'}{\ch}} \!\!\!\!\! \coef_A  +1 \,\,\, \leq 0.
$$
We have shown that  $x_Bu_{\comp{B}} \succ y_Bu_{\comp{B}}$, while $x_Bv_{\comp{B}} \prec y_Bv_{\comp{B}}$.
Therefore, $B$ is nonseparable.
\qed

%%%%%%%%%%%%%%%%%
%
% union of siblings
%
%%%%%%%%%%%%%%%%%
\subsubsection{Proof of Lemma \ref{lem:siblings}}

 Consider a constructible set  $B\notin\ch$ where $\Upsilon_{\ch}(B)  = \{ B_1, B_2, \ldots, B_k \} \subset \ch$ and all of the $B_i$ are children of $P \in \ch$. Note that $k \geq 2$ since
 $B = \construct{B}{\ch} = \cup_{i=1}^k B_i$ while $B\notin\ch$.   We prove that $B$ is nonseparable.

 Let the remaining children of $P$ be $A_1, \ldots, A_{\ell}$  where $\ell \geq 1$ (because $B \subsetneq P$) and perhaps one $A_j$ is a ghost child of $P$. 
For $i=1,2$, let $b_i \in B_i - \construct{B_i}{\ch},$ and for $1 \leq j \leq \ell$, pick any $a_j \in A_j$. 
Consider the partial outcomes  $x_B= 1_{b_2} 0_*$ and $y_B= 1_{b_1}0_*$ on $B$ and the partial outcomes
$u_{\comp{B}}=0_*$ and  $v_{\comp{B}}=1_{a_{1}}1_{a_{2}} \cdots 1_{a_{\ell}}0_*$. We will show that our preference between $x_Bu_{\comp{B}}$ and $y_Bu_{\comp{B}}$ is the opposite of our preference between $x_Bv_{\comp{B}}$ and $y_Bv_{\comp{B}}$. 
%This will prove that $B$ is not separable.

Because $P$ is the parent of both $B_1,B_2$, the partial outcomes $x$ and $y$ have the same parity in every set in $\ch - \{ B_1, B_2 \}$. In addition, $B_1$ and $B_2$ are in the same generation, so for any partial outcome $w_{\comp{B}}$ on $\comp{B}$, we have
$$
\coefsum{x_Bw_{\comp{B}}}{\ch} = \coefsum{y_Bw_{\comp{B}}}{\ch},
$$
which means that 
$$
\entry{\setvec{\ch}}{x_Bw_{\comp{B}}} - \entry{\setvec{\ch}}{y_Bw_{\comp{B}}} =
\coefsumlink{x_Bw_{\comp{B}}}{\linkage} - \coefsumlink{y_Bw_{\comp{B}}}{\linkage}.
$$
The parities of these outcomes agree on all sublinks, except for those of the form
$B_1 \cup A_j$ and $B_2 \cup A_j$.
Our choice of partial outcomes $u$ and $v$ flips the parities of the outcomes in these sets so that
$$
 \coefsumlink{x_Bu_{\comp{B}}}{\linkage} - \coefsumlink{y_Bu_{\comp{B}}}{\linkage} =  \coefsumlink{y_Bv_{\comp{B}}}{\linkage} - \coefsumlink{x_Bv_{\comp{B}}}{\linkage}
$$
and this value is nonzero by Lemma \ref{lem:link-distinct}.
Therefore,  $B$ is not separable. 
\qed

%%%%%%%%%%%%%%%%%%%%%%%%
%
% Constructible by sets with different parents
%
%%%%%%%%%%%%%%%%%%%%%
\subsubsection{Proof of Lemma \ref{lem:notsiblings}}

In this section, we prove the nonseparability of a $\ch$-constructible set  $B \notin \ch$ where at least two sets in $\Upsilon_{\ch}(B)$ have different parents.

For a nonempty set $S$, note that there is at least one  outcome that is even on $S$ (the all-zero outcome $0_*$) and at least one outcome that is odd on $S$ (the indicator outcome $1_s 0_*$ for $s \in S$). 
We start with a lemma that constructs an outcome with a specified behavior on a given chain of subsets in $\ch$. We anticipate that the construction technique of this lemma will be useful beyond its application in proving Lemma 
\ref{lem:notsiblings}.

\begin{lemma}
\label{nesting}
Consider a chain of nested sets $ \emptyset \neq  A_{1} \subsetneq A_{2} \subsetneq \cdots \subsetneq A_{r}.$
For any $T \subset [r]$, there is an outcome $w$ such that for $1 \leq i \leq r$, we have
$\setvec{A_{i}}(w) = 1$ if $i \in T$ and $\setvec{A_{i}}(w) = 0$ if $i \in [r]-T$.
\end{lemma}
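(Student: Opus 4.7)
The plan is to build $w$ by induction on the length of the chain, exploiting the fact that the ``layers'' $A_i \setminus A_{i-1}$ are nonempty and pairwise disjoint, so flipping a single bit inside layer $i$ toggles the parity of $|\widehat{w} \cap A_j|$ exactly for $j \geq i$ and leaves the parities for $j < i$ untouched. This gives a clean way to correct parities one index at a time without undoing earlier corrections.

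First I would set $A_0 = \emptyset$ so that $A_i \setminus A_{i-1} \neq \emptyset$ for every $i \in [r]$ (using strictness of the inclusions), and choose a representative $a_i \in A_i \setminus A_{i-1}$. The $a_i$ are automatically distinct since the layers are disjoint. Then I would construct a sequence of outcomes $w^{(0)}, w^{(1)}, \ldots, w^{(r)}$ as follows: take $w^{(0)} = 0_*$, the all-zero outcome, so $|\widehat{w^{(0)}} \cap A_j| = 0$ is even for every $j$. For $i = 1, 2, \ldots, r$, define $w^{(i)}$ to be $w^{(i-1)}$ unchanged if the current parity of $|\widehat{w^{(i-1)}} \cap A_i|$ already matches the target (even when $i \in T$, odd when $i \in [r] - T$), and otherwise flip the single bit in position $a_i$.

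The key observation to verify is the invariant: after step $i$, the parity of $|\widehat{w^{(i)}} \cap A_j|$ agrees with the target value for every $j \leq i$. This follows because the flip at $a_i$ (if performed) lies in $A_j$ precisely for $j \geq i$, so it corrects the parity on $A_i$ and shifts parities on $A_{i+1}, \ldots, A_r$ (which we have yet to address), while preserving the parities on $A_1, \ldots, A_{i-1}$ already fixed in earlier steps. Setting $w = w^{(r)}$ gives the desired outcome, since $\v_{A_i}(w) = 1$ iff $|\widehat{w} \cap A_i|$ is even by equation \eqref{eqn:voterentry}.

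There is essentially no real obstacle here: the proof is a straightforward greedy construction, and the only subtlety is making sure that the layered structure guarantees both that a correction bit always exists at each step and that earlier corrections are not disturbed, both of which follow directly from strictness of the chain $A_1 \subsetneq \cdots \subsetneq A_r$. The main care needed in the write-up is to state the inductive invariant cleanly and invoke the disjointness of the layers when arguing that bit $a_i$ belongs to $A_j$ if and only if $j \geq i$.
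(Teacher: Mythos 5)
Your proof is correct and follows essentially the same approach as the paper's: choose representatives $a_i \in A_i - A_{i-1}$ of the disjoint layers and set the bits greedily so that the parity on $A_i$ is corrected at step $i$ without disturbing $A_1,\ldots,A_{i-1}$. Your version just makes the inductive invariant and the "flip at $a_i$ affects $A_j$ iff $j \geq i$" observation explicit, which the paper leaves implicit.
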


\begin{proof}
For $1 \leq i \leq r$, let $a_i \in A_i - A_{i-1}$, where we take $A_0 = \emptyset$. We recursively construct an outcome of the form
$$
w = w_{a_1} w_{a_2} \cdots w_{a_r} 0_*.
$$
If $1 \in T$ then take $w_{a_1}=0$. If $1 \notin T$ then take $w_{a_1}=1$. For $2 \leq i \leq r$, take $w_{a_i}$ to be 0 or 1 depending on whether $w_{a_1} \cdots w_{a_{i-1}}$ is even or odd and whether $i \in T$.  
\qed \end{proof}

As an example,
consider the nested chain $A_1 \subsetneq A_2 \subsetneq A_3 \subsetneq A_4$ given by
$$
  \{ 1,2 \} \subsetneq \{1,2,3\} \subsetneq \{1,2,3,4,5,6\} \subsetneq \{1,2,3,4,5,6,7,8\}
 $$
and let $T=\{1,3\}.$ 
We choose $(a_1, a_2, a_3, a_4)=(1,3,4,7)$ so that $a_i \in A_i - A_{i-1}$, where we set $A_0=\emptyset$.
The outcome $0_{1}1_{3}0_{4}1_{7} 0_* = 00100010$ is even on $A_1,A_3$ and odd on $A_2, A_4$.

We now begin the proof of Lemma \ref{lem:notsiblings} in earnest.
 Consider a constructible set  $B \notin \ch$ with $\Upsilon_{\ch}(B)  = \{ B_1, B_2, \ldots, B_k \} \subset \ch$ where  $B_1$ and $B_2$ are not siblings in  $\ch$.   For $i=1,2$ let $K_i = \construct{B_i}{\ch} \subsetneq B_i$ and  $b_i \in B_i - K_i$.

Consider the  partial outcomes
$x_B  =  1_{b_2}0_*$ and $y_B   = 1_{b_1} 0_*$ on $B$.
We must track the parity of the partial outcomes $x_B$ and $y_B$ with respect to  $S \subset \ch$.
Observe that $x_B$ is even in $B_1$ and odd in $B_2$, while $y_B$ is odd in $B_1$ and even in $B_2$. More generally, if $B_1 \subset S$ but $S \cap B_2 = \emptyset$ then $x_B$ is even in $S$ while $y_B$ is odd in $S$, and if $B_2 \subset S$ but $S \cap B_1 = \emptyset$ then $x_B$ is odd in $S$ while $y_B$ is even in $S$.
Next, we note that if $S \cap B_1 = \emptyset$ and $S \cap B_2 = \emptyset$, then  both $x_B$ and $y_B$ are zero (hence even) in $S$. Finally, if $B_1 \cup B_2 \subset S$, then  $x_B$ and $y_B$ have the same parity in $S$ by Corollary \ref{supersets}.

Let $g_1=g(B_1)$ and $g_2=g(B_2)$ denote the generations of $B_1$ and $B_2$, respectively.
Let  $P$ be the first shared ancestor of $B_1$ and $B_2$. We may assume that $g_1 \geq g_2 > g$ and that $\supersets{B_1}{\ch} - \supersets{B_2}{\ch} \neq \emptyset$  because $B_1$ and $B_2$ are not siblings.
We are ready to use Lemma \ref{nesting} to construct $u_{\comp{B}}$ and $v_{\comp{B}}$ that  change the preference between the partial outcomes $x_B$ and $y_B$. There are two cases, depending on shared ancestry of $B_1$ and $B_2$.

We may assume that either $g_1 > g_2 = g+1$ or $g_1 \geq g_2 > g+1$. For $i=1,2$, let $Q_i \in \ch$ be the child of $P$ that contains $B_i$. (If $B_2$ is a child of $P$ then $B_2=Q_2$.) Let $R_1$ be the parent of $B_1$ (so that when $g_1=g+2$, we have $R_1=Q_1$).

Let $r_1 \in R_1 -B_1$. Observe that  that  $r_1 \notin B_2$, while $r_1,b_1, b_2 \in P$. We take $u_{\comp{B}}=0_*$ to be the all-zero outcome and $v_{\comp{B}}=1_{r_1} z_{*}$ to be the indicator outcome on $r_1$. 
Let  $S \in \supersets{B_1}{\ch} \cap \strictsubsets{P}{\ch}$ and $T \in \supersets{B_2}{\ch} \cap \strictsubsets{P}{\ch}$. Note that the ancestry $\supersets{B_2}{\ch} \cap \strictsubsets{P}{\ch} = \emptyset$ when $B_2$ is a child of $P$; in this case, statements below concerning $T$ hold vacuously.

 First, we consider outcomes that end in $u_{\comp{B}}$.
 Observe that $x_B u_{\comp{B}} = 1_{b_2} 0_*$ is even in $B_1$, but odd in $B_2$. Meanwhile, $y_B u_{\comp{B}} = 1_{b_1} 0_*$ is odd in $B_1$, but even in $B_2$. 
The outcome  $x_B u_{\comp{B}}$ is even in $S$ (odd in $T$) while $y_B u_{\comp{B}}$ is odd in $S$ (even in $T$). 
Therefore
\begin{align*}
\coefsum{x_Bu_{\comp{B}}}{\ch} - \coefsum{y_Bu_{\comp{B}}}{\ch} 
&=
\base^{g_1} + \sum_{i=g+1}^{g_1-1} \base^i - \left(\base^{g_2} + \sum_{i=g+1}^{g_2-1} 2^i \right) \\
& = \sum_{i=g_2+1}^{g_1} \base^i > 1.
\end{align*}

Next, we consider outcomes that end in $v_{\comp{B}}$.
Observe that $x_B v_{\comp{B}} = 1_{b_2} 1_{r_1} 0_*$ is even in $B_1$ but odd in $B_2$, while $y_B u_{\comp{B}} = 1_{b_1} 1_{r_1} 0_*$ is odd in $B_1$ but even in  $B_2$. Turning to the ancestry to $P$, we see that
$x_B v_{\comp{B}}$ is odd in $S$ (even in $T$) while $y_B v_{\comp{B}}$ is even in $S$ (odd in $T$). Therefore
\begin{align*}
\coefsum{x_Bv_{\comp{B}}}{\ch} - \coefsum{y_Bv_{\comp{B}}}{\ch} 
&=
\base^{g_1} + \sum_{i=g+1}^{g_2-1} \base^i - \left(2^{g_2} + \sum_{i=g+1}^{g_1-1} \base^i \right) \\
&=  \base^{g_1}  - \left(\base^{g_2} + \sum_{i=g_2}^{g_1-1} \base^i \right)  < 0
\end{align*}
by Lemma \ref{lem:basesum}.

We have  shown that $x_Bu_{\comp{B}} \succ y_Bu_{\comp{B}}$ and $x_Bv_{\comp{B}} \prec y_Bv_{\comp{B}}$, so the set $B$ is not separable
\qed

%%%%%%%%%%%
%\subsection{The Voter Basis}

\section{Conclusions and Future Work}

\label{chap:conclusion}

The admissibility problem asks which collections of sets correspond to characters of preference orderings. We have introduced the voter basis and used this basis to create preference orderings with desired separability properties. In particular, we have shown that every tree character is admissible. We believe that our tree construction just begins to tap into the potential of the voter basis for character construction, and we are actively working on constructing other families of admissible characters. We also wonder whether the voter basis can provide insight into the class of completely separable preferences. 

The proof herein for showing that $\VV_n$ forms a basis for the preference space $\PP^n$ is short and effective. However, its simplicity hides the deep connection between constructing voter preferences and the symmetries of the hypercube. We are hopeful that the representation theory can provide further insight into the admissibility problem.

\bigskip

{\bf Acknowledgments.}
 We thank Tom Halverson for many insightful conversations and his help in developing the voter basis; Jeremy Martin for suggesting  Lemma \ref{lem:martin}; and Trung Nguyen and Tuyet-Anh Tran for their careful readings of earlier drafts.

\bibliography{refs}        %use a bibtex bibliography file refs.bib
\bibliographystyle{plain}  %use the plain bibliography style

\end{document}